\documentclass[11pt]{amsart}    
\usepackage{latexsym, amsmath, amsthm, amssymb, setspace, verbatim}
\usepackage[all]{xy}
\usepackage{longtable}
\usepackage[applemac]{inputenc}
\usepackage{hyperref}
\usepackage{color}
\usepackage[usenames,dvipsnames]{xcolor}

\title[Rokhlin actions on UHF absorbing \cstar-algebras]{ Rokhlin actions of finite groups on UHF-absorbing \cstar-algebras }
\author{ Sel\c{c}uk Barlak \and Gábor Szabó }
\address{Westfälische Wilhelms-Universität, Fachbereich Mathematik, \phantom{--------------}\linebreak \text{}\hspace{3.5mm} Einsteinstrasse 62, 48149 Münster, Germany}
\email{selcuk.barlak@uni-muenster.de, gabor.szabo@uni-muenster.de}
\thanks{\emph{Supported by:} SFB 878 \emph{Groups, Geometry and Actions} and GIF Grant 1137-30.6/2011}
\subjclass[2010]{46L55, 46L35}


\begin{document}

\renewcommand\matrix[1]{\left(\begin{array}{*{10}{c}} #1 \end{array}\right)}  
\newcommand\set[1]{\left\{#1\right\}}  
\newcommand\mset[1]{\left\{\!\!\left\{#1\right\}\!\!\right\}}

\newcommand{\IA}[0]{\mathbb{A}} \newcommand{\IB}[0]{\mathbb{B}}
\newcommand{\IC}[0]{\mathbb{C}} \newcommand{\ID}[0]{\mathbb{D}}
\newcommand{\IE}[0]{\mathbb{E}} \newcommand{\IF}[0]{\mathbb{F}}
\newcommand{\IG}[0]{\mathbb{G}} \newcommand{\IH}[0]{\mathbb{H}}
\newcommand{\II}[0]{\mathbb{I}} \renewcommand{\IJ}[0]{\mathbb{J}}
\newcommand{\IK}[0]{\mathbb{K}} \newcommand{\IL}[0]{\mathbb{L}}
\newcommand{\IM}[0]{\mathbb{M}} \newcommand{\IN}[0]{\mathbb{N}}
\newcommand{\IO}[0]{\mathbb{O}} \newcommand{\IP}[0]{\mathbb{P}}
\newcommand{\IQ}[0]{\mathbb{Q}} \newcommand{\IR}[0]{\mathbb{R}}
\newcommand{\IS}[0]{\mathbb{S}} \newcommand{\IT}[0]{\mathbb{T}}
\newcommand{\IU}[0]{\mathbb{U}} \newcommand{\IV}[0]{\mathbb{V}}
\newcommand{\IW}[0]{\mathbb{W}} \newcommand{\IX}[0]{\mathbb{X}}
\newcommand{\IY}[0]{\mathbb{Y}} \newcommand{\IZ}[0]{\mathbb{Z}}

\newcommand{\CA}[0]{\mathcal{A}} \newcommand{\CB}[0]{\mathcal{B}}
\newcommand{\CC}[0]{\mathcal{C}} \newcommand{\CD}[0]{\mathcal{D}}
\newcommand{\CE}[0]{\mathcal{E}} \newcommand{\CF}[0]{\mathcal{F}}
\newcommand{\CG}[0]{\mathcal{G}} \newcommand{\CH}[0]{\mathcal{H}}
\newcommand{\CI}[0]{\mathcal{I}} \newcommand{\CJ}[0]{\mathcal{J}}
\newcommand{\CK}[0]{\mathcal{K}} \newcommand{\CL}[0]{\mathcal{L}}
\newcommand{\CM}[0]{\mathcal{M}} \newcommand{\CN}[0]{\mathcal{N}}
\newcommand{\CO}[0]{\mathcal{O}} \newcommand{\CP}[0]{\mathcal{P}}
\newcommand{\CQ}[0]{\mathcal{Q}} \newcommand{\CR}[0]{\mathcal{R}}
\newcommand{\CS}[0]{\mathcal{S}} \newcommand{\CT}[0]{\mathcal{T}}
\newcommand{\CU}[0]{\mathcal{U}} \newcommand{\CV}[0]{\mathcal{V}}
\newcommand{\CW}[0]{\mathcal{W}} \newcommand{\CX}[0]{\mathcal{X}}
\newcommand{\CY}[0]{\mathcal{Y}} \newcommand{\CZ}[0]{\mathcal{Z}}

\newcommand{\FA}[0]{\mathfrak{A}} \newcommand{\FB}[0]{\mathfrak{B}}
\newcommand{\FC}[0]{\mathfrak{C}} \newcommand{\FD}[0]{\mathfrak{D}}
\newcommand{\FE}[0]{\mathfrak{E}} \newcommand{\FF}[0]{\mathfrak{F}}
\newcommand{\FG}[0]{\mathfrak{G}} \newcommand{\FH}[0]{\mathfrak{H}}
\newcommand{\FI}[0]{\mathfrak{I}} \newcommand{\FJ}[0]{\mathfrak{J}}
\newcommand{\FK}[0]{\mathfrak{K}} \newcommand{\FL}[0]{\mathfrak{L}}
\newcommand{\FM}[0]{\mathfrak{M}} \newcommand{\FN}[0]{\mathfrak{N}}
\newcommand{\FO}[0]{\mathfrak{O}} \newcommand{\FP}[0]{\mathfrak{P}}
\newcommand{\FQ}[0]{\mathfrak{Q}} \newcommand{\FR}[0]{\mathfrak{R}}
\newcommand{\FS}[0]{\mathfrak{S}} \newcommand{\FT}[0]{\mathfrak{T}}
\newcommand{\FU}[0]{\mathfrak{U}} \newcommand{\FV}[0]{\mathfrak{V}}
\newcommand{\FW}[0]{\mathfrak{W}} \newcommand{\FX}[0]{\mathfrak{X}}
\newcommand{\FY}[0]{\mathfrak{Y}} \newcommand{\FZ}[0]{\mathfrak{Z}}

\newcommand{\Ra}[0]{\Rightarrow}
\newcommand{\La}[0]{\Leftarrow}
\newcommand{\LRa}[0]{\Leftrightarrow}

\renewcommand{\phi}[0]{\varphi}
\newcommand{\eps}[0]{\varepsilon}

\newcommand{\quer}[0]{\overline}
\newcommand{\uber}[0]{\choose}
\newcommand{\ord}[0]{\operatorname{ord}}		
\newcommand{\GL}[0]{\operatorname{GL}}
\newcommand{\supp}[0]{\operatorname{supp}}	
\newcommand{\id}[0]{\operatorname{id}}		
\newcommand{\Sp}[0]{\operatorname{Sp}}		
\newcommand{\eins}[0]{\mathbf{1}}			
\newcommand{\diag}[0]{\operatorname{diag}}
\newcommand{\auf}[1]{\quad\stackrel{#1}{\longrightarrow}\quad}
\newcommand{\prim}[0]{\operatorname{Prim}}
\newcommand{\ad}[0]{\operatorname{Ad}}
\newcommand{\ext}[0]{\operatorname{Ext}}
\newcommand{\ev}[0]{\operatorname{ev}}
\newcommand{\fin}[0]{{\subset\!\!\!\subset}}
\newcommand{\diam}[0]{\operatorname{diam}}
\newcommand{\Hom}[0]{\operatorname{Hom}}
\newcommand{\Aut}[0]{\operatorname{Aut}}
\newcommand{\del}[0]{\partial}
\newcommand{\dimnuc}[0]{\dim_{\mathrm{nuc}}}
\newcommand{\dr}[0]{\operatorname{dr}}
\newcommand{\dimrok}[0]{\dim_{\mathrm{Rok}}}
\newcommand{\dimrokcyc}[0]{\dim_{\mathrm{Rok}}^{\mathrm{cyc}}}
\newcommand{\dimrokcycc}[0]{\dim_{\mathrm{Rok}}^{\mathrm{cyc,c}}}
\newcommand{\dimnuceins}[0]{\dimnuc^{\!+1}}
\newcommand{\dreins}[0]{\dr^{\!+1}}
\newcommand{\dimrokeins}[0]{\dimrok^{\!+1}}
\newcommand{\reldimrok}[2]{\dimrok(#1~|~#2)}
\newcommand{\mdim}[0]{\operatorname{mdim}}
\newcommand*\onto{\ensuremath{\joinrel\relbar\joinrel\twoheadrightarrow}} 
\newcommand*\into{\ensuremath{\lhook\joinrel\relbar\joinrel\rightarrow}}  
\newcommand{\im}[0]{\operatorname{Im}}
\newcommand{\dst}[0]{\displaystyle}
\newcommand{\cstar}[0]{$\mathrm{C}^*$}
\newcommand{\dist}[0]{\operatorname{dist}}
\newcommand{\ue}[0]{{\approx_{\mathrm{u}}}}
\newcommand{\mue}[0]{{\approx_{\mathrm{mu}}}}
\newcommand{\End}[0]{\operatorname{End}}
\newcommand{\Ell}[0]{\operatorname{Ell}}
\newcommand{\gr}[0]{\operatorname{gr}}
\newcommand{\Ost}[0]{\mathcal{O}_\infty^{\mathrm{st}}}
\newcommand{\Bst}[0]{\mathcal{B}^{\mathrm{st}}}
\newcommand{\inv}[0]{\operatorname{Inv}}
\newcommand{\ann}[0]{\operatorname{Ann}}

\newtheorem{satz}{Satz}[section]		
\newtheorem{cor}[satz]{Corollary}
\newtheorem{lemma}[satz]{Lemma}
\newtheorem{prop}[satz]{Proposition}
\newtheorem{theorem}[satz]{Theorem}
\newtheorem*{theoreme}{Theorem}

\theoremstyle{definition}
\newtheorem{defi}[satz]{Definition}
\newtheorem*{defie}{Definition}
\newtheorem{defprop}[satz]{Definition \& Proposition}
\newtheorem{nota}[satz]{Notation}
\newtheorem*{notae}{Notation}
\newtheorem{rem}[satz]{Remark}
\newtheorem*{reme}{Remark}
\newtheorem{example}[satz]{Example}
\newtheorem{defnot}[satz]{Definition \& Notation}
\newtheorem{question}[satz]{Question}
\newtheorem*{questione}{Question}

\newenvironment{bew}{\begin{proof}[Proof]}{\end{proof}}

\begin{abstract} 
This paper serves as a source of examples of Rokhlin actions or locally representable actions of finite groups on \cstar-algebras satisfying a certain UHF-absorption condition.
We show that given any finite group $G$ and a separable, unital \cstar-algebra $A$ that absorbs $M_{|G|^\infty}$ tensorially, one can lift any group homomorphism $G\to\Aut(A)/_{\ue}$ to an honest Rokhlin action $\gamma$ of $G$ on $A$. Unitality may be dropped in favour of stable rank one or being stable. If $A$ belongs to a certain class of \cstar-algebras that is classifiable by a suitable invariant (e.g. $K$-theory), then in fact every $G$-action on the invariant lifts to a Rokhlin action of $G$ on $A$.
For the crossed product \cstar-algebra $A\rtimes_\gamma G$ of a Rokhlin action on a UHF-absorbing \cstar-algebra, an inductive limit decomposition is obtained in terms of $A$ and $\gamma$.
If $G$ is assumed to be abelian, then the dual action $\hat{\gamma}$ is locally representable in a very strong sense.  We then show how some well-known constructions of finite group actions with certain predescribed properties can be recovered and extended by the main results of this paper, when paired with known classification theorems. Among these is Blackadar's famous construction of symmetries on the CAR algebra whose fixed point algebras have non-trivial $K_1$-groups. Lastly, we use the results of this paper to reduce the UCT problem for separable, nuclear \cstar-algebras to a question about certain finite group actions on $\CO_2$.
\end{abstract}

\maketitle


\setcounter{section}{-1}

\section{Introduction}
\noindent
The study of noncommutative dynamical systems is a very active area of research with lots of open problems and things yet to discover. Be it either the investigation of crossed product \cstar-algebras or the actions themselves, \cstar-dynamical systems spark great interest from the viewpoint of classification theory. In the particular case of finite group actions, Phillips has been steadily paving the way towards a classification theory of pointwise outer actions on unital Kirchberg algebras, building on ideas of ordinary classification theory of Kirchberg algebras \cite{PhiC, KiC} and making use of equivariant absorption theorems in the spirit of \cite{KiPhi}. The key ideas of the latter have already been demonstrated in \cite{GoldIz}. Concerning the case of poly-$\IZ$ groups, initial positive results in this direction are due to Nakamura \cite{Nakamura}, Izumi-Matui \cite{IzumiMatuiZ2} and is currently developed further by Izumi-Matui, see \cite{IzumiMatuiOWR}.

An important question related to such a classification theory is how large the range of the objects is that one wishes to classify. For example, if a certain class of \cstar-algebras is classified by $K$-theoretic data and one wishes to study finite group actions on such, this begs the question of whether every group action on the $K$-theory of a \cstar-algebra in this class can be lifted to an honest group action on the \cstar-algebra. To be more precise, one poses the following question:

\begin{questione} If $A$ belongs to a class of \cstar-algebras that is classifiable by a functor $\inv$ (in a suitable sense) and $\sigma: G\curvearrowright \inv(A)$ is an action of a finite group on the invariant of $A$, does there exist an action $\alpha: G\curvearrowright A$ with $\inv(\alpha)=\sigma$? 
\end{questione}

It seems that, compared to the recent progress in the Elliott classification program, satisfactory answers to this question are very scarce. We note that even within the setting of $G=\IZ_n$ and $A$ being an AF algebra, this question appears to be still open, see \cite[10.11.3]{BlaKK}.
 
To the authors' best knowledge, only the case of actions on unital UCT Kirchberg algebras has been successfully examined so far, in which the invariant boils down to $K$-theory. Symmetries, i.e.\ $\IZ_2$-actions, were first considered in \cite{BKP}. The main result asserted that if the \cstar-algebras in question are unital UCT Kirchberg algebras in Cuntz standard form, then the above question has an affirmative answer. This was extended to finite cyclic group actions of prime order in \cite{Sp}, where also the assumption of the Cuntz standard form could be dropped. Finally in \cite{Kat}, this was further extended to actions of finite groups whose Sylow subgroups are cyclic. 

Certain difficulties in the aforementioned papers arose from the fact that a slightly stronger question was asked than just the above. For example, a related question is if there exists a lift $\alpha$ like in the above question that comes from a (partial) split of the classifying functor applied to $\sigma$. Another stronger question, which was answered in \cite{BKP} for symmetries, is how large the range of actions is with respect to equivariant $K$-theory instead of ordinary $K$-theory.

In this paper, we take another viewpoint that is also suitable for actions on not necessarily classifiable \cstar-algebras: For a \cstar-algebra $A$ and a finite group $G$, when can a homomorphism $G\to\Aut(A)/_\ue$ lift to an honest action of $G$ on $A$? While we must certainly impose certain restrictions on $A$, it turns out that a sufficient criterion is common enough to produce a variety of interesting examples. Incidentally, the lifts we construct all have the Rokhlin property. 

\begin{theoreme}[\ref{range}] Let $G$ be a finite group and $A$ a separable \cstar-algebra that absorbs the UHF algebra $M_{|G|^\infty}$ tensorially. Assume that $A$ is either unital, stable or has stable rank one. Then any homomorphism $G\to\Aut(A)/_\ue$ lifts to a Rokhlin action of $G$ on $A$.
\end{theoreme}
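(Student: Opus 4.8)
The plan is to produce the lift by marrying a fixed \emph{model action} that carries the Rokhlin property with the given approximate homomorphism, and then straightening the result into an honest action by an Elliott-type intertwining inside the central sequence algebra. First I would fix automorphisms $\alpha_g\in\Aut(A)$ representing the values of the given homomorphism $\phi$, normalised so that $\alpha_e=\id$; the homomorphism condition in $\Aut(A)/_\ue$ then reads $\alpha_g\alpha_h\ue\alpha_{gh}$, equivalently the ``cocycle'' automorphisms $\theta_{g,h}:=\alpha_g\alpha_h\alpha_{gh}^{-1}$ are approximately inner, $\theta_{g,h}\ue\id$. Next I would recall the model action $\mu\colon G\curvearrowright\CD:=M_{|G|^\infty}$ realised as the infinite tensor power of $\ad(\lambda)$, where $\lambda\colon G\to U(\ell^2 G)$ is the left regular representation and $M_{|G|}=B(\ell^2 G)$. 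The diagonal matrix units $e_{h,h}$ in any tensor factor give projections summing to $1$ that are permuted by $\mu$ according to left translation, and may be taken arbitrarily far out in the tensor product; hence $\mu$ has the Rokhlin property. Since $\mu_g$ restricts to an inner automorphism on each finite subproduct, each $\mu_g$ is approximately inner, so $\mu_g\ue\id_\CD$.

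Using a fixed isomorphism $A\cong A\otimes\CD$, I would pass to the combined family $\beta_g:=\alpha_g\otimes\mu_g$. Because $\mu$ is an \emph{honest} action, its factor contributes no defect: $\beta_g\beta_h=(\theta_{g,h}\otimes\id_\CD)\,\beta_{gh}$, where now $\theta_{g,h}\otimes\id_\CD\ue\id$; and since $\mu_g\ue\id_\CD$ we also have $\beta_g\ue\alpha_g$, so $\beta$ still represents $\phi$. The advantage of $\beta$ over $\alpha$ is that it comes equipped with Rokhlin towers $1_A\otimes e_{h,h}$ sitting in the copy of $\CD$, ready to witness the Rokhlin property once the family is made multiplicative.

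The crux is to convert the approximately multiplicative family $\beta$, whose defect is the approximately inner automorphisms $\theta_{g,h}\otimes\id_\CD$, into a genuine action $\gamma$ with $\gamma_g\ue\beta_g$ while preserving the towers. Here I would run a reindexing/intertwining argument (in the spirit of Elliott and Evans--Kishimoto) inside $A_\infty=\ell^\infty(A)/c_0(A)$: because $A\cong A\otimes\CD$ and $\CD\cong\CD\otimes\CD$, the central sequence algebra $F_\infty(A)=(A_\infty\cap A')/\ann(A)$ again absorbs a copy of $\CD$, supplying an abundant reservoir of asymptotically central unitaries compatible with the tower permutation. Exploiting separability, I would inductively choose, over an exhausting sequence of finite sets, unitaries that simultaneously correct $\beta_g$ for \emph{all} $g\in G$ so the group relations hold to ever finer tolerance, drawing the corrections from successively deeper tensor factors so as to disturb neither the $\ue$-class nor the towers. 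Passing to the limit yields honest automorphisms $\gamma_g$ with $\gamma_g\gamma_h=\gamma_{gh}$ and $\gamma_g\ue\beta_g\ue\alpha_g$, so that $g\mapsto[\gamma_g]$ recovers $\phi$; the surviving towers, assembled into a unital equivariant $*$-homomorphism $C(G)\to F_\infty(A)$, exhibit the Rokhlin property. The main difficulty I anticipate is coordinating the unitary corrections across all pairs $(g,h)$ at once so that every relation closes up \emph{exactly} in the limit without enlarging the $\ue$-class --- precisely the point at which the self-absorption $\CD\cong\CD\otimes\CD$ and the resulting freedom in $F_\infty(A)$ are indispensable.

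Finally, for the non-unital cases I would replace $1$ by an approximate unit and formulate the Rokhlin property through projections in the multiplier (or corona) algebra summing to the unit. Stability, respectively stable rank one, is used only to guarantee that the correcting unitaries can be found in the relevant unitary group and that approximate unitary equivalence of automorphisms remains well behaved; with these provisos the construction above goes through essentially verbatim.
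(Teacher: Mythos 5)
Your setup is sound: choosing representatives $\alpha_g$ with approximately inner defect $\theta_{g,h}=\alpha_g\alpha_h\alpha_{gh}^{-1}$, tensoring with the infinite tensor power of $\ad(\lambda)$ on $M_{|G|^\infty}$ to import Rokhlin towers, and noting that this does not change the classes in $\Aut(A)/_\ue$ are all correct observations, and the role you assign to $M_{|G|^\infty}$-absorption is the right one. But the step you yourself flag as ``the crux'' --- converting the approximately multiplicative family $\beta_g$ into an honest action by inductively choosing unitary corrections over an exhausting sequence of finite sets --- is a genuine gap, and it is the entire content of the theorem. Perturbing each $\beta_g$ by a unitary replaces the cocycle $\theta_{g,h}$ by a new one, and there is no mechanism in your sketch forcing the sequence of perturbed automorphisms to converge pointwise in norm to automorphisms satisfying the group relations \emph{exactly}. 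The Evans--Kishimoto intertwining you invoke produces a conjugacy between two honest automorphisms; it does not manufacture an honest action out of an approximate one. For finite groups the analogous untwisting of an approximate cocycle is a stability theorem in its own right, and the known proofs of such stability typically presuppose an honest Rokhlin action --- so as stated your plan is circular at its central step.

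The paper avoids this problem entirely by never perturbing automorphisms. It builds two inductive systems over the building blocks $M_{|G|^{n-1}}\otimes A\otimes\CC(G)$: one, $A_\Phi$, whose connecting maps $\Phi_n$ \emph{exactly} intertwine the coordinate shift, so that the limit carries an honest Rokhlin action $\gamma'$ for free; and another, $A_\Psi$, whose connecting maps $\Psi_n$ are rigged (using the $\beta_g$) so that the natural embedding $A\into A_\Psi$ is approximately unitarily equivalent to an isomorphism (Lemma \ref{psilim}, again an Elliott intertwining against $M_{|G|^\infty}\otimes A$). Since $\Phi_n\ue\Psi_n$ --- this is where $\beta_g\beta_h\ue\beta_{gh}$ enters, via conjugation by the left regular representation --- the one-sided Elliott intertwining \ref{intertwining}(2) identifies $A_\Psi\cong A_\Phi$ compatibly with the connecting maps, and $\gamma'$ is simply pulled back to $A$. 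In other words, the coordination of corrections across all pairs $(g,h)$ that you worry about is delegated to the intertwining lemma for inductive limits, where only the connecting $*$-homomorphisms need to agree up to $\ue$; the exact group relations are never assembled by a limit of perturbations but are inherited from the model system. If you want to salvage your approach, you would need to prove a cocycle-vanishing/stability result for approximate $G$-actions on $M_{|G|^\infty}$-absorbing algebras first, which is not easier than the theorem itself. (The hypotheses unital/stable/stable rank one are used, as you guessed, to pass from multiplier approximate unitary equivalence to $\ue$ with unitaries in $\tilde A$ so that the intertwining applies.)
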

 
Notably, no simplicity or classifiability assumption on $A$ is needed in order to prove this. Looking back to the above question about the range of the invariant of $G$-actions on classifiable \cstar-algebras, this more general viewpoint is (a priori) weaker because the Elliott invariant alone is usually not strong enough to distinguish between approximate unitary equivalence classes of $*$-homo\-morphisms. However, a strong enough classification result paired with UCT usually implies that the canonical map $\Aut(A)/_\ue\to \Aut(\inv(A))$ is not only surjective, but has a split. For example, this works for Kirchberg algebras or simple, nuclear TAF algebras, see \cite{Lin0, LinC}. This enables one to reduce the above question about the range of the invariant of actions to the question of being able to lift homomorphisms $G\to\Aut(A)/_\ue$ to honest group actions on $A$. In particular, \ref{range} allows us to prove the following result:

\begin{theoreme}[\ref{Rok range Kirchberg} and \ref{Rok range TAF}] 
Let $G$ be a finite group and $A$ a separable, unital, nuclear and simple \cstar-algebra with $A\cong M_{|G|^\infty}\otimes A$. Assume that $A$ satisfies the UCT and is either purely infinite or TAF. Then Rokhlin actions of $G$ on $A$ exhaust all $G$-actions on the (ordered) $K$-theory of $A$.
\end{theoreme}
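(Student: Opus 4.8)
The plan is to reduce the statement to Theorem~\ref{range} by first lifting the prescribed $K$-theoretic action to a homomorphism into $\Aut(A)/_\ue$. So let $\sigma\colon G\to\Aut(\inv(A))$ be an action on the (ordered) $K$-theory, where $\inv(A)$ denotes ordered $K$-theory together with the class of the unit. Since approximately unitarily equivalent automorphisms induce the same map on $K$-theory, the assignment $\alpha\mapsto\inv(\alpha)$ descends to a group homomorphism $q\colon\Aut(A)/_\ue\to\Aut(\inv(A))$; the whole point is to lift $\sigma$ through $q$ to an \emph{honest group homomorphism} $\phi\colon G\to\Aut(A)/_\ue$ with $q\circ\phi=\sigma$, and then feed $\phi$ into Theorem~\ref{range}.

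The heart of the argument is producing this lift, and this is where classification, the UCT and the UHF-absorption all enter. In the purely infinite case $A$ is a unital UCT Kirchberg algebra, and the Kirchberg--Phillips classification identifies $\Aut(A)/_\ue$ with the group of $KK$-invertible self-morphisms of $A$ preserving $[1_A]$; in the TAF case the corresponding identification is furnished by Lin's classification \cite{Lin0,LinC}. Under this identification $q$ becomes the passage from an invertible $KK$-element to its action on $K$-theory, whose kernel is, by the UCT, controlled by $\ext^1_\IZ\!\big(K_*(A),K_{*+1}(A)\big)$ and is in particular an abelian group. Now the hypothesis $A\cong M_{|G|^\infty}\otimes A$ makes $K_*(A)$ a module over $\IZ[1/|G|]$, so that this kernel is uniquely $|G|$-divisible. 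Consequently the obstruction to lifting the action $\sigma$ of the finite group $G$ through $q$, which lives in $H^2\!\big(G,\ker q\big)$, vanishes, and a homomorphic lift $\phi$ exists; equivalently, $q$ restricts to a split surjection over the finite subgroup $\sigma(G)$. I expect this lifting step to be the main obstacle: without the divisibility coming from UHF-absorption one would face a genuine group-cohomological obstruction, which is precisely the phenomenon that forced restrictions on $G$ in earlier work such as \cite{BKP,Sp,Kat}.

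Once $\phi$ is in hand the rest is formal. As $A$ is separable, unital and $M_{|G|^\infty}$-absorbing, Theorem~\ref{range} applies to $\phi$ and yields a Rokhlin action $\gamma\colon G\curvearrowright A$ whose class in $\Aut(A)/_\ue$ equals $\phi$. Since $q$ factors the $K$-theory functor through $\Aut(A)/_\ue$, one computes $\inv(\gamma)=q\circ\phi=\sigma$, so that $\gamma$ realizes the given action on ordered $K$-theory. Running this argument with the Kirchberg--Phillips identification establishes \ref{Rok range Kirchberg}, and with Lin's identification establishes \ref{Rok range TAF}.
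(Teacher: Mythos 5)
Your proposal is correct and follows the paper's overall reduction---lift the $K$-theoretic action to a homomorphism $G\to\Aut(A)/_\ue$ and then invoke Theorem \ref{range} (via Corollary \ref{ue range})---but the lifting step itself is done by a genuinely different argument. The paper's proofs of \ref{Rok range Kirchberg} and \ref{Rok range TAF} identify $\Aut(A)/_\ue$ with the unit- (and order-) preserving invertibles in $KL(A,A)$ via Kirchberg--Phillips, resp.\ Lin's classification, and then assert that the resulting map onto $\Aut(K_*(A))$ is \emph{split}-surjective as a group homomorphism by the UCT (citing \cite[23.11.1]{BlaKK}); the lift of $\sigma$ is then simply composition with this global section, and UHF-absorption plays no role at this stage. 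You instead use only the \emph{surjectivity} of $q$ together with the observation that $\ker q$ is an abelian group---being of the form $1+I$ for the square-zero ideal $I$ coming from $\ext^1_\IZ\bigl(K_*(A),K_{*+1}(A)\bigr)$---which is uniquely $|G|$-divisible because $K_*(A)$ is a $\IZ[1/|G|]$-module, whence $H^2(G,\ker q)=0$ and the lifting obstruction vanishes. This is a genuine trade-off: your route sidesteps the need for a multiplicative splitting of the UCT sequence (the delicate point behind the paper's ``split-surjective'' claim), but it only applies to finite groups and genuinely consumes the UHF-absorption hypothesis already in the lifting step, whereas the paper's splitting---if granted---lifts actions of arbitrary groups. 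Two small points you should make explicit: the surjectivity of $q$ onto $\sigma(G)$ (needed before any obstruction class in $H^2$ can even be defined; it follows from the UCT realizing $K_*$-isomorphisms by invertible $KK$-elements together with the classification theorems), and the fact that the classification identifies $\Aut(A)/_\ue$ with invertibles in $KL(A,A)$ rather than $KK(A,A)$ (approximate, as opposed to asymptotic, unitary equivalence); neither affects the validity of your argument, since the kernel remains an abelian, uniquely $|G|$-divisible quotient of the $\ext$-term.
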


The Rokhlin property for finite groups was pioneered by Herman and Jones in the setting of UHF algebras, see \cite{HermanJones1, HermanJones2}. They followed the argument of Rokhlin actions developed by Connes \cite{Connes1, Connes2} and Ocneanu \cite{Ocneanu} for amenable groups in the von Neumann algebra setting. In his remarkable papers \cite{Izumi, Izumi2}, Izumi then introduced the Rokhlin property for finite group actions on unital \cstar-algebras. Finite group actions with the Rokhlin property are very rigid. For example, two Rokhlin actions on a unital, separable \cstar-algebra, which are pointwise approximately unitarily equivalent, are conjugate by an approximately inner automorphism, see \cite{Izumi}. In fact, the same result holds for non-unital \cstar-algebras as well; first proved in \cite{Nawata} for \cstar-algebras of almost stable rank one, and then proved in general in \cite{GardSant}. In particular, this means that up to conjugacy, we must construct every possible Rokhlin action of $G$ on $A$ in order to prove \ref{range}. This enables us to make some non-trivial observations concerning the structure of the crossed product \cstar-algebras $A\rtimes_\gamma G$, when $\gamma$ is an arbitrary action with the Rokhlin property and $A$ absorbs $M_{|G|^\infty}$ tensorially.
We will also examine the dual actions of Rokhlin actions more closely, in the case that the acting group is abelian. It turns out that when $G$ and $A$ are as in \ref{range} with $G$ abelian and $\gamma: G\curvearrowright A$ has the Rokhlin property, then $\hat{\gamma}$ is locally representable in a very strong sense. In particular, it is an inductive limit type action, where the building blocks and the actions on the building blocks are fairly easy to grasp. 

The paper is organized as follows.
In the first section, we present some notation and definitions that we will use throughout the paper and remind the reader of some standard techniques.

In the second section, we prove that under the aforementioned conditions on a finite group $G$ and a \cstar-algebra $A$, every homomorphism $G\to\Aut(A)/_\ue$ lifts to a Rokhlin action of $G$ on $A$. We do this by constructing an inductive limit model system on which we define the action. Combining UHF-absorption of $A$ with an Elliott intertwining argument, we prove that the model system just recovers the given \cstar-algebra $A$. Several consequences of this model system are deduced, for example an inductive limit decomposition of the crossed product \cstar-algebra $A\rtimes_\gamma G$, when $A$ is $M_{|G|^\infty}$-absorbing and $\gamma$ is an arbitrary Rokhlin action of $G$ on $A$.

In the third section, we examine the dual actions of the Rokhlin actions treated in Section 2, when the acting group is abelian. As is known from \cite{Izumi, Nawata}, such a dual action is always approximately representable. Under the UHF-absorption condition, however, it turns out that such a dual action is even locally representable in a very strong sense.

In the fourth section, we treat some interesting examples that arise as consequences of the main results of this paper. It turns out that Blackadar's famous construction of \cite{Blackadar} can be recovered and extended to a more general setting by combining \ref{Rok range TAF} with Lin's classification theory of TAF algebras \cite{LinC}. Moreover, an analogous result as the range result for certain symmetries on the CAR algebra \cite[6.2.4]{Blackadar} follows for actions of all finite abelian groups, particularly for order $p$ automorphisms for all $p>2$. In his remarkable work on the Rokhlin property of finite group actions on unital \cstar-algebras \cite{Izumi, Izumi2}, Izumi gave a range result of approximately representable actions of finite cyclic groups with prime power order on $\CO_2$, see \cite[4.8(3), 4.9]{Izumi} and \cite[6.4]{Izumi2}. This extends quite naturally to finite cyclic groups of any order, and in fact all finite abelian groups, by combining \ref{Rok range Kirchberg} with Kirchberg-Phillips classification. On the front of non-unital \cstar-algebras, one can also combine \ref{range} with Robert's classification theorem \cite{Robert} to construct actions on stably projectionless \cstar-algebras. For example, the construction carried out in \cite[5.6]{Nawata}, which was intended as a stably projectionless analogue of \cite[4.7]{Izumi}, can be recovered. At last, we sketch how to reduce the UCT problem for separable, nuclear \cstar-algebras to the question of whether one can leave the UCT class by passing to crossed product \cstar-algebras of $\CO_2$ by finite group actions. We will even reduce it to the more special setting where one only needs to consider certain locally representable actions of $\IZ_2$ and $\IZ_3$ on $\CO_2$. It is possible that this is already known to some experts (see \cite[23.15.12(d)]{BlaKK}), but to the authors' knowledge, there is no proof in the literature. 

The authors would like to thank Dominic Enders for a number of very useful remarks on an earlier preprint version of this paper. Moreover, the authors are grateful to the referee for a lot of suggestions that have led to a considerable overall improvement of this paper.




\section{Preliminaries}

\begin{samepage}
\nota Unless specified otherwise, we will stick to the following notations throughout the paper.
\begin{itemize}
\item $G$ denotes a fixed finite group. 
\item For a natural number $p\geq 2$, we denote $\IZ_p = \IZ/p\IZ$.
\item $A$ and $B$ are separable \cstar-algebras.
\item $\tilde{B}$ denotes the unitalization of $B$, if $B$ is non-unital. If $B$ already is unital, then $\tilde{B}$ is $B$. $\CM(B)$ denotes the multiplier algebra of $B$.
\item If $M$ is some set and $F\subset M$ is a finite subset, we write $F\fin M$.
\item For $\eps>0$ and $a,b$ in some normed space, we write $a=_\eps b$ for $\|a-b\|\leq\eps$.
\end{itemize}
\end{samepage}

First, we recall some needed definitions. Note that, in the next definition, we are in principle following \cite{Ki}, but with a slightly enhanced notation to point out that we work with ordinary sequence algebras instead of ultrapowers.

\defi \label{central sequence def} Let $A$ be a separable \cstar-algebra. Let $B\subset A_\infty$ be a sub-\cstar-algebra. Define
\[
\ann(B,A_\infty) = \set{x\in A_\infty ~|~ xb=bx=0~\text{for all}~b\in B.}
\]
and
\[
F_\infty(B,A) = (A_\infty \cap B')/\ann(B, A_\infty) 
\]
as the central sequence algebra of $A$ relative to $B$. One writes $F_\infty(A) = F_\infty(A,A)$.
If $\phi\in\Aut(A)$, we denote the induced automorphism on $F_\infty(A)$, given by componentwise application of $\phi$, by $\phi_\infty$. This notation carries over to (finite) group actions.

\begin{defi}[following {\cite[3.1]{Nawata}}] \label{rokprop nawata}
Let $G$ be a finite group and $A$ a separable \cstar-algebra. Let $\alpha: G\curvearrowright A$ be an action. Then $\alpha$ is said to have the Rokhlin property, if there exists an equivariant and unital $*$-homomorphism
\[
(\CC(G),G\text{-shift})\longrightarrow (F_\infty(A), \alpha_\infty).
\]
In this context, we shall also call $\alpha$ a Rokhlin action.
\end{defi}

\begin{rem} \label{rokprop} 
Denote by $\set{e_g}_{g\in G}$ the characteristic functions of the points $g\in G$ inside $\CC(G)$. By representing the images of each $e_g$ in $F_\infty(A)$ by some bounded sequences in $\ell^\infty(\IN, A)$ it is easy to check the following:

Let $G$ be a finite group, $A$ a separable \cstar-algebra and $\alpha: G\curvearrowright A$ an action via automorphisms.
Then $\alpha$ has the Rokhlin property, if and only if the following holds:
For all $\eps>0$ and $F\fin A$, there exist positive contractions 
$\set{a_g}_{g\in G}$ in $A$ satisfying\vspace{-1mm}
\begin{itemize}
\item[(1)] $\dst \Bigl( \sum_{g\in G} a_g\Bigl)x=_\eps x$ for all $x\in F$.
\item[(2)] $\alpha_g(a_h)x =_\eps a_{gh} x$\quad for all $g,h\in G$ and $x\in F$.\vspace{1mm}
\item[(3)] $\|a_g a_h x\|\leq\eps$\quad for all $g\neq h$ in $G$ and $x\in F$.\vspace{1mm}
\item[(4)] $\|[a_g,x]\|\leq\eps$\quad for all $g\in G$ and $x\in F$.\vspace{1mm}
\end{itemize}
\end{rem}

We refer to \cite{Santiago} for a more detailed survey and comparison between several known variants of the Rokhlin property. In particular, it follows from \cite{Santiago} that the Rokhlin elements can actually be chosen in such a way that the relations (2) and (3) hold approximately in norm instead of the strict topology. However, as the above formulation is a priori the weakest version, it is also the easiest to verify.

\defi \begin{itemize} \label{approx unitary equ}
\item Two $*$-homomorphisms $\phi,\psi: A\to B$ are called approximately multiplier unitarily equivalent, written $\phi\mue\psi$, if there are unitaries $u_n\in\CU(\CM(B))$ such that
\[\psi(a) = \lim_{n\to\infty} u_n\phi(a)u_n^*\quad\text{for all}~a\in A. \]
\item Two $*$-homomorphisms $\phi,\psi: A\to B$ are called approximately unitarily equivalent, written $\phi\ue\psi$, if there are unitaries $u_n\in\CU(\tilde{B})$ such that
\[\psi(a) = \lim_{n\to\infty} u_n\phi(a)u_n^*\quad\text{for all}~a\in A. \]
\item A $*$-endomorphism on $A$ is called approximately inner, if it is approximately unitarily equivalent to $\id_A$. A $*$-endomorphism on $A$ is called approximately multiplier inner, if it is approximately multiplier unitarily equivalent to $\id_A$.
\end{itemize}

\rem Obviously, the relations $\phi\mue\psi$ and $\phi\ue\psi$ are the same, if $B$ is unital. In general, the two definitions are different.
However, it is known that $\phi\mue\psi$ implies $\phi\ue\psi$ in the cases that $B$ is either stable or has stable rank one.

It is well-known that $\Aut(A)/_{\ue}$ defines a group with the operation $[\theta_1]\cdot [\theta_2]=[\theta_1\circ\theta_2]$. The same is true for $\Aut(A)/_{\mue}$.

\rem[see \cite{TomsWinter}] \label{uhfabs}
\begin{enumerate}
\item Let $q\geq 2$ be any number and $A\cong M_{q^\infty}\otimes A$. Then the canonical embedding
\[\eins_{M_{q^\infty}}\otimes\id_A: A\to M_{q^\infty}\otimes A,\quad a\mapsto\eins_{M_{q^\infty}}\otimes a \]
is approximately multiplier unitarily equivalent to an isomorphism.
\item Let $\phi$ be such an isomorphism. Then one has that
\[ \eins_{M_{q^n}}\otimes\Bigl(\phi^{-1}\circ (\eins_{M_{q^\infty}}\otimes \id_{M_{q^n}\otimes A})\Bigl): M_{q^n}\otimes A\to M_{q^n}\otimes A \]
is approximately multiplier inner for all $n\in\IN$.
\end{enumerate}

\begin{lemma}[see {\cite[2.3.4 and 2.3.3]{Rordam}} and {\cite{RordamKorrektur}}] \label{intertwining}
\begin{itemize}
\item[(1)] Let $A$ and $B$ be separable \cstar-algebras. Let $\phi_0: A\to B$ and $\psi_0: B\to A$ be two $*$-homomorphisms such that $\psi_0\circ\phi_0\ue\id_A$ and $\phi_0\circ\psi_0\ue \id_B$. Then there is an isomorphism $\phi: A\to B$ with $\phi\ue\phi_0$ and $\phi^{-1}\ue\psi_0$. The same statement holds for $\mue$ instead of $\ue$.
\item[(2)] Let $\set{A_n}_{n\in\IN}$ be a sequence of separable \cstar-algebras. For every $n$, let
\[\Phi_n: A_n\to A_{n+1}\quad\text{and}\quad \Psi_n: A_n\to A_{n+1} \]
be two $*$-homomorphisms. If $\Phi_n\ue\Psi_n$ for all $n$, then there exists an isomorphism
\[ \Theta: \lim_{\longrightarrow} \set{ A_n, \Psi_n} \longrightarrow \lim_{\longrightarrow} \set{A_n,\Phi_n} \]
such that
\[\Phi_{n+1,\infty}\circ\Psi_n\ue\Theta\circ\Psi_{n,\infty}\quad\text{and}\quad \Psi_{n,\infty}\ue\Theta^{-1}\circ\Phi_{n,\infty} \]
for all $n\in\IN$. The same statement holds for $\mue$ instead of $\ue$.
\end{itemize}
\end{lemma}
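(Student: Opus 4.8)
The engine behind both parts is Elliott's two-sided approximate intertwining theorem: an approximately commuting ladder diagram of separable \cstar-algebras, with errors that are summable along increasing finite subsets whose unions are dense, induces an isomorphism of the inductive limits. This is the classical argument underlying the cited results of \cite{Rordam}, and my plan in each part is to manufacture such a ladder whose rungs are inner perturbations of the given maps, chosen recursively so that the hypotheses $\ue$ (resp.\ $\mue$) supply exactly the unitaries needed to make the errors summable.

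For part (1) I would take the two constant systems $A\xrightarrow{\id}A\xrightarrow{\id}\cdots$ and $B\xrightarrow{\id}B\xrightarrow{\id}\cdots$, whose limits are just $A$ and $B$. Fix increasing finite subsets $F_1\subset F_2\subset\cdots$ of $A$ and $G_1\subset G_2\subset\cdots$ of $B$ with dense unions and a summable sequence $\eps_n>0$. I would then build unitaries $s_n\in\CU(\tilde B)$ and $t_n\in\CU(\tilde A)$ recursively, setting $\phi_n=\ad(s_n)\circ\phi_0$ and $\psi_n=\ad(t_n)\circ\psi_0$, so that $\psi_n(\phi_n(x))=_{\eps_n}x$ for $x\in F_n$ and $\phi_{n+1}(\psi_n(y))=_{\eps_n}y$ for $y\in G_n$. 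Each step is possible because, extending $\phi_0,\psi_0$ to the unitalizations so that unitaries map to unitaries, one has $\phi_0\circ\ad(t)=\ad(\tilde\phi_0(t))\circ\phi_0$; the relation $\psi_0\circ\phi_0\ue\id_A$ then lets me pick $t_n$ and $\phi_0\circ\psi_0\ue\id_B$ lets me pick $s_{n+1}$ to absorb the accumulated conjugation. This makes $(\phi_n),(\psi_n)$ an approximate intertwining of the two constant systems, yielding an isomorphism $\phi\colon A\to B$. Since $\phi$ agrees on $F_n$ with $\ad(s_n)\circ\phi_0$ up to $\eps_n$, the unitaries $s_n^*$ witness $\phi\ue\phi_0$, and symmetrically $\phi^{-1}\ue\psi_0$.

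For part (2) I would build a genuine (non-zig-zag) ladder from the source system $\set{A_n,\Psi_n}$ to the target system $\set{A_n,\Phi_n}$ whose vertical maps are inner automorphisms $\mu_n=\ad(u_n)\in\Aut(A_n)$. Starting from $u_1=\eins$, I would choose at each stage a unitary $w_n\in\CU(\tilde A_{n+1})$ with $\ad(w_n)(\Phi_n(x))=_{\delta_n}\Psi_n(x)$ for $x$ in a suitable finite subset of $A_n$ (possible since $\Phi_n\ue\Psi_n$), with $\delta_n$ summable, and set $u_{n+1}=\tilde\Phi_n(u_n)\,w_n^*$. The point of the recursion is that $\Phi_n\circ\mu_n=\ad(\tilde\Phi_n(u_n))\circ\Phi_n$ and $\mu_{n+1}\circ\Psi_n=\ad(\tilde\Phi_n(u_n)w_n^*)\circ\Psi_n$ then agree up to $\delta_n$ on the chosen set, so the ladder approximately commutes. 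As each $\mu_n$ is an isomorphism, the intertwining theorem produces $\Theta\colon\lim\set{A_n,\Psi_n}\to\lim\set{A_n,\Phi_n}$. Tracking the canonical maps—using $\Phi_{n,\infty}\circ\mu_n\ue\Phi_{n,\infty}$ (conjugation by a unitary) together with $\Phi_{n+1,\infty}\circ\Psi_n\ue\Phi_{n+1,\infty}\circ\Phi_n=\Phi_{n,\infty}$—then yields the two asserted relations $\Phi_{n+1,\infty}\circ\Psi_n\ue\Theta\circ\Psi_{n,\infty}$ and $\Psi_{n,\infty}\ue\Theta^{-1}\circ\Phi_{n,\infty}$.

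The routine but essential bookkeeping—arranging the finite sets to be compatible with the connecting maps and ensuring the errors genuinely telescope—is where all the care lies, and it is the heart of the Elliott argument that one must get right. The only genuinely new obstacle in the $\mue$ versions is that the perturbing unitaries now live in $\CU(\CM(\cdot))$, so the recursion requires the relevant homomorphisms to extend to the multiplier algebras in order to make sense of terms like $\tilde\Phi_n(u_n)$; granting these extensions, the identical recursions go through verbatim.
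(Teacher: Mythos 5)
Your argument is the standard Elliott approximate-intertwining argument, which is exactly what the paper relies on: it offers no proof of this lemma but cites R\o rdam's 2.3.3 and 2.3.4 (together with the corrections on his webpage), whose proofs proceed by precisely the recursions you describe --- a two-sided intertwining between constant systems for (1), and a one-sided ladder with inner vertical maps $\ad(u_n)$ and $u_{n+1}=\tilde\Phi_n(u_n)w_n^*$ for (2). The only step I would not call ``verbatim'' is the $\mue$ case, where, as you yourself note, one must justify moving multiplier unitaries past possibly degenerate $*$-homomorphisms (for $\ue$ the unital extensions to the unitalizations always exist, for $\CM(\cdot)$ they need not); in the paper's applications the relevant connecting maps are extendible, so this is harmless, but it is the one genuine point of care beyond the bookkeeping.
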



\section{An existence theorem for Rokhlin actions on UHF absorbing \cstar-algebras}
\noindent
In this section, we prove that for every finite group $G$ and certain UHF absorbing \cstar-algebras $A$, every homomorphism $G\to\Aut(A)/_\ue$ lifts to a Rokhlin action of $G$ on $A$. 

\nota
\begin{itemize} 
\item For a \cstar-algebra $A$ and a finite index set $I$, denote
\[A^{\oplus I} = \bigoplus_{i\in I} A. \]
\item For a finite group $G$, let $\set{e_{g,h}}_{g,h\in G}$ denote the generating matrix units of $M_{|G|}$.
\end{itemize}

\begin{lemma} \label{psilim} 
Let $G$ be a finite group and $A$ a separable \cstar-algebra with $A\cong M_{|G|^\infty}\otimes A$. Assume that either $A$ is unital, stable, or has stable rank one. Let $\set{\beta_g}_{g\in G}\subset\Aut(G)$ be a collection of automorphisms. Let $A^{(n)}=M_{|G|^{n-1}}\otimes A$ and $\beta^{(n)}_g=\id_{M_{|G|^{n-1}}}\otimes\beta_g$ for all $n\in\IN, g\in G$ and consider the inductive system
\[
\Psi_n: A^{(n) \oplus G} \to A^{(n+1)\oplus G},\quad \Psi_n((x_g)_{g\in G}) = \Bigl( \sum_{h\in G} e_{h,h}\otimes (\beta_g^{(n)})^{-1}\circ\beta_h^{(n)}(x_h) \Bigl)_{g\in G}.
\]
Denote $\dst A_\Psi = \lim_{\longrightarrow} \set{ A^{(n)\oplus G}, \Psi_n }$.
Then the embedding 
\[ \iota_\infty: A\into A_\Psi,\quad x\longmapsto \Psi_{1,\infty}\Bigl(((\beta_g)^{-1}(x))_{g\in G}\Bigl) \]
is approximately unitarily equivalent to an isomorphism.
\end{lemma}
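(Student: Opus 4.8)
The plan is to produce honest isomorphisms of inductive systems that transform $\iota_\infty$ into the canonical embedding $\eins_{M_{|G|^\infty}}\otimes\id_A\colon A\to M_{|G|^\infty}\otimes A$, and then to invoke Remark \ref{uhfabs}(1) together with the passage from $\mue$ to $\ue$.

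First I would \emph{untwist} the connecting maps. For each $n$ let $B_n\colon A^{(n)\oplus G}\to A^{(n)\oplus G}$ be the automorphism $(x_g)_g\mapsto(\beta_g^{(n)}(x_g))_g$. Using $\beta_g^{(n+1)}=\id_{M_{|G|}}\otimes\beta_g^{(n)}$, a direct computation gives
\[
B_{n+1}\circ\Psi_n\circ B_n^{-1}\bigl((y_g)_g\bigr)=\Bigl(\sum_{h\in G}e_{h,h}\otimes y_h\Bigr)_{g\in G}=:\tilde\Psi_n\bigl((y_g)_g\bigr),
\]
so the $B_n$ assemble to an isomorphism $B_\infty\colon A_\Psi\to A_{\tilde\Psi}$ onto the limit $A_{\tilde\Psi}=\lim_{\longrightarrow}\set{A^{(n)\oplus G},\tilde\Psi_n}$ of the untwisted system, in which the $\beta_g$ no longer appear. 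Tracking $\iota_\infty$ through $B_\infty$, and using $\beta_g^{(1)}=\beta_g$, I expect $B_\infty\circ\iota_\infty(x)=\tilde\Psi_{1,\infty}\bigl((x)_{g\in G}\bigr)$; that is, the twist cancels and $\iota_\infty$ becomes the inclusion of $A$ as constant tuples.

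Second I would identify $A_{\tilde\Psi}$ with $M_{|G|^\infty}\otimes A$. The point is that every output of $\tilde\Psi_n$ is a constant tuple, so the system collapses onto its diagonal. Writing $\delta_n\colon A^{(n)}\to A^{(n)\oplus G}$ for $z\mapsto(z)_{g}$ and $D\colon A^{(n)\oplus G}\to A^{(n+1)}$ for $(y_g)_g\mapsto\sum_h e_{h,h}\otimes y_h$, one checks the zig-zag relations $\delta_{n+1}\circ D=\tilde\Psi_n$ and $D\circ\delta_n=\theta_n$, where $\theta_n\colon A^{(n)}\to A^{(n+1)}$ is $z\mapsto\eins_{M_{|G|}}\otimes z$. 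These exhibit an honest isomorphism $\delta_\infty\colon\lim_{\longrightarrow}\set{A^{(n)},\theta_n}\to A_{\tilde\Psi}$, and the source system is exactly the standard presentation of $M_{|G|^\infty}\otimes A$. Under these identifications $\iota_\infty$ corresponds to the canonical map $\theta_{1,\infty}\colon A=A^{(1)}\to M_{|G|^\infty}\otimes A$, namely $x\mapsto\eins_{M_{|G|^\infty}}\otimes x$.

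Finally, since $A\cong M_{|G|^\infty}\otimes A$, Remark \ref{uhfabs}(1) (with $q=|G|$) shows this canonical embedding is approximately multiplier unitarily equivalent to an isomorphism; as the target is unital, stable, or of stable rank one, $\mue$ implies $\ue$. Transporting back through the honest isomorphisms $B_\infty$ and $\delta_\infty$, which preserve $\ue$ under post-composition (conjugating unitaries in the unitalization), yields an isomorphism $A\to A_\Psi$ that is approximately unitarily equivalent to $\iota_\infty$. The main obstacle I anticipate is purely bookkeeping: correctly verifying that the untwisted, collapsed limit is $M_{|G|^\infty}\otimes A$ and that $\iota_\infty$ becomes precisely the canonical embedding; once this is set up, the conclusion is a direct appeal to UHF-absorption.
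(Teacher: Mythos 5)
Your proof is correct, and while it rests on the same underlying insight as the paper's --- namely that after removing the twist by the $\beta_g$ the system $\set{A^{(n)\oplus G},\Psi_n}$ is just the standard stabilization system for $M_{|G|^\infty}\otimes A$ with $\iota_\infty$ the canonical embedding --- your mechanics are genuinely different and somewhat cleaner. The paper does not untwist explicitly; instead it defines compatible maps $P_n((x_g)_g)=\eins_{M_{|G|^\infty}}\otimes\sum_g e_{g,g}\otimes\beta_g^{(n)}(x_g)$ (which are, in effect, your $D\circ B_n$ followed by $\eins_{M_{|G|^\infty}}\otimes(\cdot)$), assembles them into $P: A_\Psi\to M_{|G|^\infty}\otimes A$, sets $p=\phi^{-1}\circ P$ with $\phi$ as in Remark \ref{uhfabs}(1), and then verifies the two conditions $p\circ\iota_\infty\ue\id_A$ and $\iota_\infty\circ p\ue\id_{A_\Psi}$ so as to invoke the approximate intertwining Lemma \ref{intertwining}(1); the second verification requires Remark \ref{uhfabs}(2). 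You replace all of this by two exact identifications of inductive systems --- the conjugation by the automorphisms $B_n$ and the zig-zag $\delta_{n+1}\circ D=\tilde\Psi_n$, $D\circ\delta_n=\theta_n$ --- after which a single application of Remark \ref{uhfabs}(1), plus the observation that $\mue$ implies $\ue$ here and that honest isomorphisms transport $\ue$, finishes the argument. What your route buys is the avoidance of both \ref{intertwining}(1) and \ref{uhfabs}(2) and a transparent identification of $A_\Psi$ with $M_{|G|^\infty}\otimes A$; what the paper's route buys is that the maps $P_n$, $\iota$ and the associated computations are reused nearly verbatim in the proofs of Theorem \ref{range} and the subsequent structural results, so the bookkeeping there is already in place.
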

\begin{proof}
For every $n$, define a $*$-homomorphism
\[P_n: A^{(n)\oplus G}\to M_{|G|^\infty}\otimes A\quad\text{via}\quad
P_n((x_g)_{g\in G}) = \eins_{M_{|G|^\infty}}\otimes  \Bigl(\sum_{g\in G} e_{g,g}\otimes \beta^{(n)}_g(x_g)\Bigl).\]
Observe for all $n$ and $(x_g)_g\in A^{(n)\oplus G}$ that
\[ \begin{array}{ccl}
\multicolumn{3}{l}{ P_{n+1}\circ\Psi_n((x_g)_{g\in G}) }\\
\hspace{10mm} &=& \dst P_{n+1}\left( \Bigl( \sum_{h\in G} e_{h,h}\otimes (\beta_g^{(n)})^{-1}\circ\beta_h^{(n)}(x_h) \Bigl)_{g\in G}\right) \\
&=& \dst \eins_{M_{|G|^\infty}}\otimes\left( \sum_{g\in G} e_{g,g}\otimes \beta^{(n+1)}_g \Bigl( \sum_{h\in G} e_{h,h}\otimes (\beta_g^{(n)})^{-1}\circ\beta_h^{(n)}(x_h) \Bigl)  \right) \\
&=& \dst \eins_{M_{|G|^\infty}}\otimes\left( \sum_{g,h\in G} e_{g,g}\otimes e_{h,h}\otimes \beta_h^{(n)}(x_h) \right) \\
&=& P_n((x_g)_{g\in G})
.\end{array}\]
Hence, the $P_n$ give rise to a well-defined $*$-homomorphism $P: A_\Psi\to M_{|G|^\infty}\otimes A$. By \ref{uhfabs}, we pick an isomorphism $\phi: A\to M_{|G|^\infty}\otimes A$ with $\phi\ue\eins_{M_{|G|^\infty}}\otimes\id_A$. We obtain a $*$-homomorphism $p = \phi^{-1}\circ P: A_\Psi\to A$.

Now define $\iota: A\into A^{(1)\oplus G}$ via $\iota(x)=((\beta_g)^{-1}(x))_{g\in G}$. For $g\in G$, let $q_g^{(n)}: A^{(n)\oplus G}\to A^{(n)}$ denote the canonical projection onto the $g$-component. Check that
\[ \begin{array}{cll}
p\circ\iota_\infty &=& p\circ\Psi_{1,\infty}\circ\iota \vspace{2mm}\\
&=& \phi^{-1}\circ P_1\circ\iota \\
&=& \dst\phi^{-1}\circ\left( \eins_{M_{|G|^\infty}}\otimes\Bigl( \sum_{g\in G} e_{g,g}\otimes\beta_g\circ q^{(1)}_g\circ\iota \Bigl) \right) \\
&=& \dst\phi^{-1}\circ\left( \eins_{M_{|G|^\infty}}\otimes\Bigl( \sum_{g\in G} e_{g,g}\otimes\id_A \Bigl) \right) \\
&=& \phi^{-1}\circ(\eins_{M_{|G|^\infty}}\otimes\id_A) \\
&\ue& \id_A
.\end{array}\]
Next, we wish to show that also $\iota_\infty\circ p\ue\id_{A_\Psi}$. Observe that for all $n$, we have the identity $\Psi_{1,n}\circ\iota = \eins_{M_{|G|^{n-1}}}\otimes\iota$.
Using this, we calculate for all $n$:
\[ \begin{array}{cll}
\multicolumn{3}{l}{ \Psi_{1,n+1}\circ\iota\circ\phi^{-1}\circ P_n} \\
&=& (\eins_{M_{|G|^{n}}}\otimes\iota)\circ\phi^{-1}\circ P_n \\
&=& \dst \bigoplus_{g\in G} (\beta_g^{(n+1)})^{-1}\circ (\eins_{M_{|G|^{n}}}\otimes\id_A)\circ\phi^{-1}\circ P_n \\
&=& \dst \bigoplus_{g\in G} (\beta_g^{(n+1)})^{-1}\circ (\eins_{M_{|G|^{n}}}\otimes\id_A)\circ\phi^{-1}\circ \left( \eins_{M_{|G|^\infty}}\otimes  \Bigl(\sum_{h\in G} e_{h,h}\otimes \beta^{(n)}_h\circ q^{(n)}_h\Bigl) \right) \\
&\hspace{-1.5mm}\stackrel{\ref{uhfabs}(2)}{\ue}& \dst \bigoplus_{g\in G} (\beta_g^{(n+1)})^{-1}\circ \Bigl(\sum_{h\in G} e_{h,h}\otimes \beta^{(n)}_h\circ q^{(n)}_h\Bigl) \\
&=& \dst \bigoplus_{g\in G} \Bigl( \sum_{h\in G} e_{h,h} \otimes (\beta_g^{(n)})^{-1}\circ\beta_h^{(n)}\circ q^{(n)}_h \Bigl) \\
&=& \Psi_n
.\end{array}\]
Hence, it is clear that $\iota_\infty\circ p = \Psi_{1,\infty}\circ\iota\circ\phi^{-1}\circ P\ue\id_{A_\Psi}$. The proof is complete with an application of \ref{intertwining}(1).
\end{proof} 

Using the previous lemma, we can prove an existence result for Rokhlin actions on \cstar-algebras as in \ref{psilim}.

{
\theorem \label{range} Let $G$ be a finite group and $A$ a separable \cstar-algebra with $A\cong M_{|G|^\infty}\otimes A$. Assume that either $A$ is unital, stable, or has stable rank one. Let $\set{\beta_g}_{g\in G}\subset\Aut(A)$ be a collection of automorphisms that defines a $G$-action on $A$ up to approximate unitary equivalence, i.e. 
$\beta_g\circ\beta_h\ue\beta_{gh}$ for all $g,h\in G$.
Then there exists a Rokhlin action $\gamma: G\curvearrowright A$ such that $\gamma_g\ue\beta_g$ for all $g\in G$.
}
\begin{proof}
We make use of \ref{psilim} and its proof. Adopt the notation for the $A^{(n)}$, the connecting maps $\Psi_n$ and the limit $A_\Psi$. Also recall the definition of $\iota: A\to A^{(1)\oplus G}$ and $\iota_\infty: A\to A_\Psi$.

To define the action $\gamma$, we will use a model system for $A$ similar to $A_\Psi$. Then we will compare the new model system with $A_\Psi$ and we will see that the action, when pulled back to $A$, will be an action with the desired properties.

For all $n$, we define
\[ 
\Phi_n: A^{(n)\oplus G}\to A^{(n+1)\oplus G}\quad\text{via}\quad \Phi_n((x_g)_{g\in G})=\Bigl(\sum_{h\in G} e_{h,h}\otimes\beta_h^{(n)}(x_{gh}) \Bigl)_{g\in G}. 
\]
Consider the limit $\dst A_\Phi=\lim_{\longrightarrow} \set{ A^{(n)\oplus G}, \Phi_n}$. For all $n\in\IN$, we define a $G$-action $\gamma'^{(n)}: G\curvearrowright A^{(n)\oplus G}$ by the shift $\gamma'^{(n)}_f((x_g)_{g\in G}) = (x_{f^{-1}g})_{g\in G}$. One checks for all $n$ and $f\in G$ that
\[ 
\begin{array}{cll}
\gamma'^{(n+1)}_f\circ\Phi_n((x_g)_{g\in G}) &=& \dst
\gamma'^{(n+1)}_f\left(\Bigl(\sum_{h\in G} e_{h,h}\otimes\beta_h^{(n)}(x_{gh}) \Bigl)_{g\in G} \right) \\
&=& \dst \Bigl(\sum_{h\in G} e_{h,h}\otimes\beta_h^{(n)}(x_{f^{-1}gh}) \Bigl)_{g\in G} \vspace{2mm}\\
&=& \Phi_n( (x_{f^{-1}g})_{g\in G} ) \vspace{2mm}\\
&=& \Phi_n\circ\gamma'^{(n)}_f((x_g)_{g\in G})
.\end{array}
\]
Therefore, the actions $\gamma'^{(n)}$ extend to an action $\gamma': G\curvearrowright A_\Phi$ on the inductive limit. It is obvious that $\gamma'$ has the Rokhlin property, since the building block actions $\gamma'^{(n)}$ are of the form $\id_{A^{(n)}}\otimes (G\text{-shift}):G\curvearrowright A^{(n)}\otimes \CC(G)$.

Now let $\lambda: G\to\CU(M_{|G|})$ be the left-regular representation of $G$ given by $\lambda(g)=\sum_{h\in G} e_{gh,h}$. Note that $\lambda(g)e_{h,k}\lambda(g)^*=e_{gh,gk}$ for all $g,h,k\in G$. Then the properties of the $\beta_g$ show that 
\[
\begin{array}{ccl}
\Psi_n &\ue&  \dst\ad\Bigl( \bigoplus_{g\in G} \lambda(g^{-1})\otimes \eins_{A^{(n)}}\Bigl)\circ \Psi_n  \\\\
&=& \dst \ad\Bigl( \bigoplus_{g\in G} \lambda(g^{-1})\otimes \eins_{A^{(n)}}\Bigl)\circ \Bigl(\bigoplus_{g\in G} \sum_{h\in G} e_{h,h}\otimes(\beta_g^{(n)})^{-1}\circ\beta_h^{(n)}\circ q_h^{(n)} \Bigl) \\\\
&=& \dst\bigoplus_{g\in G} \sum_{h\in G} e_{g^{-1}h,g^{-1}h}\otimes(\beta_g^{(n)})^{-1}\circ\beta_h^{(n)}\circ q_h^{(n)} \\\\
&\ue& \dst\bigoplus_{g\in G} \sum_{h\in G} e_{g^{-1}h,g^{-1}h}\otimes\beta_{g^{-1}h}^{(n)}\circ q_h^{(n)} \\\\
&=& \dst\bigoplus_{g\in G} \sum_{h\in G} e_{h,h}\otimes\beta_{h}^{(n)}\circ q_{gh}^{(n)} \quad = \Phi_n
\end{array}
\]
for all $n$. So we may apply \ref{intertwining}(2) to find an isomorphism
$ \Theta: A_\Psi \to A_\Phi$
with
\[
\Phi_{n+1,\infty}\circ\Psi_n\ue\Theta\circ\Psi_{n,\infty}\quad\text{and}\quad \Psi_{n,\infty}\ue\Theta^{-1}\circ\Phi_{n,\infty} 
\]
for all $n\in\IN$.
Moreover, we apply \ref{psilim} to find an isomorphism $\sigma: A\to A_\Psi$ with $\sigma\ue\iota_\infty$. We define
\[ 
\gamma: G\curvearrowright A\quad\text{by}\quad \gamma_g = \sigma^{-1}\circ\Theta^{-1}\circ\gamma'_g\circ\Theta\circ\sigma\quad\text{for all}~g\in G.
\]
Since $\gamma'$ had the Rokhlin property, so has $\gamma$. For all $f\in G$, we now observe that
\[ 
\begin{array}{cll}
\gamma_f &\ue& \sigma^{-1}\circ\Theta^{-1}\circ\gamma'_f\circ\Theta\circ\Psi_{1,\infty}\circ\iota \vspace{2mm}\\
&\ue&\sigma^{-1}\circ\Theta^{-1}\circ\gamma'_f\circ\Phi_{2,\infty}\circ\Psi_1\circ\iota \vspace{2mm}\\
&=& \sigma^{-1}\circ\Theta^{-1}\circ\Phi_{2,\infty}\circ\gamma'^{(2)}_f\circ \Psi_1\circ\iota \vspace{2mm}\\
&=& \sigma^{-1}\circ\Theta^{-1}\circ\Phi_{2,\infty}\circ\gamma'^{(2)}_f\circ (\eins_{M_{|G|}}\otimes\iota) \vspace{2mm}\\
&=& \dst\sigma^{-1}\circ\Theta^{-1}\circ\Phi_{2,\infty}\circ\gamma'^{(2)}_f\circ\Bigl(\bigoplus_{h\in G} \eins_{M_{|G|}}\otimes(\beta_h)^{-1}\Bigl) \vspace{2mm}\\
&=& \dst\sigma^{-1}\circ\Theta^{-1}\circ\Phi_{2,\infty}\circ
\Bigl(\bigoplus_{h\in G} \eins_{M_{|G|}}\otimes(\beta_{f^{-1}h})^{-1} \Bigl) \vspace{2mm}\\
&\ue& \dst\sigma^{-1}\circ\Theta^{-1}\circ\Phi_{2,\infty}\circ
\Bigl(\bigoplus_{h\in G} \eins_{M_{|G|}}\otimes(\beta_h)^{-1}\circ \beta_f \Bigl) \vspace{2mm}\\
&=& \dst\sigma^{-1}\circ\Theta^{-1}\circ\Phi_{2,\infty}\circ(\eins_{M_{|G|}}\otimes\iota)\circ\beta_f \vspace{2mm}\\
&\ue& \sigma^{-1}\circ\Psi_{2,\infty}\circ\Psi_1\circ\iota\circ\beta_f \vspace{2mm}\\
&=& \sigma^{-1}\circ\iota_\infty\circ\beta_f \quad\ue\quad \beta_f
.\end{array}
\]
This finishes the proof.
\end{proof}

Recall the following particularly useful rigidity result for Rokhlin actions:

{
\theorem[see {\cite[3.5]{Izumi}, \cite[3.5]{Nawata}} and {\cite[Thm 3.4]{GardSant}} ] \label{rok ue}
Let $A$ be a separable \cstar-algebra. Let $\alpha^{(0)},\alpha^{(1)}: G\curvearrowright A$ be two Rokhlin actions of a finite group with $\alpha_g^{(0)}\ue\alpha_g^{(1)}$ for all $g\in G$. Then there is an approximately inner automorphism $\phi\in\Aut(A)$ such that $\alpha_g^{(1)}\circ\phi=\phi\circ\alpha_g^{(0)}$ for all $g\in G$. 
}

\defi Let $H_1, H_2$ be two discrete groups. Recall that two group homomorphisms $\psi_1,\psi_2: H_1\to H_2$ are called conjugate, if there exists $g\in H_2$ with $\psi_2(h)=g\psi_1(h)g^{-1}$ for all $h\in H_1$. We denote $\Hom(H_1,H_2)$ modulo the conjugacy relation by $\quer{\Hom}(H_1,H_2)$.

\defi Let $G$ be a finite group and $A$ a \cstar-algebra. Let $\CR_G(A)$ denote the set of all Rokhlin actions of $G$ on $A$ and let $\quer{\CR}_G(A)$ be the set of all conjugacy classes of Rokhlin actions of $G$ on $A$.

{
\cor \label{ue range} Let $G$ be a finite group and $A$ a separable \cstar-algebra with $A\cong M_{|G|^\infty}\otimes A$. Assume that $A$ is either unital, stable, or has stable rank one. Then the natural map
\[
\CR_G(A)\longrightarrow\Hom(G, \Aut(A)/_\ue),\quad [g\mapsto \alpha_g]\longmapsto [g\mapsto [\alpha_g]_{\ue}] 
\]
is surjective and induces a bijection
\[
\quer{\CR}_G(A)\longrightarrow\quer{\Hom}(G, \Aut(A)/_\ue). 
\]
}
\begin{proof}
Surjectivity follows directly from \ref{range} and injectivity follows from \ref{rok ue}.
\end{proof}

The following fact appears to be well-known among experts, and a special case has been proved in \cite{GardSant}. However, to the best of the authors' knowledge, there is no proof of the following fact in the current literature in this generality, so we shall give one here for the reader's convenience. Note that this rounds out the above result \ref{ue range} quite nicely.

{
\prop Let $G$ be a finite group and $A$ a separable \cstar-algebra. If there exists a Rokhlin action $\alpha: G\curvearrowright A$ such that $\alpha_g$ is approximately inner for all $g$, 
then $A\cong M_{|G|^\infty}\otimes A$.
}
\begin{proof}
Let $A^1$ and $(A_\infty)^1$ denote the unitalization of $A$ and $A_\infty$, respectively. Since $\alpha_g$ is approximately inner for every $g\in G$, we find a family of unitaries $\set{u_g}_{g\in G}\subset (A_\infty)^1$
 satisfying
\[
\alpha_g(a)=u_gau_g^*\quad \text{for all}\ a\in A.
\]
Let $\varepsilon:(A_\infty)^1\to \IC$ denote the canonical character, and define
\[
C:=\mathrm{C}^*(A,\set{u_g-\varepsilon(u_g)\ |\ g\in G})\subset A_\infty.
\]
As $C$ is separable and $\alpha$ has the Rokhlin property, one can apply \ref{rokprop} to construct a family of positive contractions
\[
\set{f_g}_{g\in G}\subset A_\infty\cap C^\prime=A_\infty\cap(A\cup\set{u_g\ |\ g\in G})^\prime
\]
such that for all $g,h,k\in G$ with $h\neq k$ and $c\in C$, the following relations hold:
\[
\Bigl( \sum_{g\in G} f_g\Bigl)c=c,\ \alpha_{g,\infty}(f_h)c = f_{gh}c,\ f_h f_k c=0,\ \text{and}\ [c,f_g]=0.
\]
Using again that $\alpha$ is pointwise approximately inner, we find a family of unitaries $\set{v_g}_{g\in G}\subset (A_\infty)^1$
with the property that
\[
\alpha_{g,\infty}(a)=v_gav_g^*\quad \text{for all}\ a\in A\cup\set{f_g\ |\ g\in G}.
\]
For $g\in G$, we define
\[
d_g:=u_g^*v_gf_1^{1/2}\in A_\infty\cap A^\prime.
\]
Note that $d_g$ indeed commutes with all elements of $A$, since this holds for both $f_1$ and $u_g^*v_g$. The latter is true as both $u_g$ and $v_g$ implement the automorphism $\alpha_g$ on $A\subset A_\infty$. Moreover, for every $a\in A$ and $g\in G$, one computes
\[
d_gd_g^*a=u_g^*v_gf_1v_g^*u_ga=u_g^* f_gu_ga=f_ga,
\]
and
\[
d_g^*d_ga=f_1^{1/2}v_g^*u_gu_g^*v_gf_1^{1/2}a=f_1 a.
\]
For every $g\in G$, we obtain well-defined elements
\[
x_g:=d_g+\ann(A,A_\infty),\ e_g:=f_g+\ann(A,A_\infty)\in F_\infty(A).
\]
The family $\set{e_g}_{g\in G}$ then defines a partition of unity consisting of projections in $F_\infty(A)$. Furthermore, for every $g\in G$, we have $x_gx_g^*=e_g$ and $x_g^*x_g=e_1$ by construction.
Using that the universal unital \cstar-algebra
\[
D:=\mathrm{C}^*\bigl(\dst y_g\ \text{partial isometry}\ |\ y_g^*y_g=y_1^*y_1,\ \sum_{g\in G}y_gy_g^*=1\dst \bigr)
\]
is isomorphic to $M_{|G|}$ via
\[
D\longrightarrow M_{|G|},\ y_g \mapsto e_{g,1},
\]
one concludes that the elements $\set{x_g}_{g\in G}$ give rise to a unital $*$-homomorphism
\[
M_{|G|}\longrightarrow F_{\infty}(A).
\]
A standard reindexation argument shows that then there exists also a unital $*$-homo\-morphism
\[
M_{|G|^\infty}\longrightarrow F_{\infty}(A).
\]
By applying either \cite[4.11]{Ki} or \cite[2.3]{TomsWinter}, we get $A\cong A\otimes M_{|G|^\infty}$.
\end{proof}

\rem We have proved the surjectivity part of \ref{ue range}, namely \ref{range}, by combining $M_{|G|^\infty}$-absorption of the given \cstar-algebra $A$ with an Elliott intertwining argument (see \ref{intertwining}), using approximate unitary equivalence in the sense of \ref{approx unitary equ} with unitaries in $\tilde{A}$. One can also apply Elliott intertwining with multiplier approximate unitary equivalence in the same manner, to carry out the constructions of \ref{psilim} and \ref{range} in the case of arbitrary separable \cstar-algebras. This way, one can prove the following weakening of \ref{ue range} for every separable \cstar-algebra $A$:

If $A\cong M_{|G|^\infty}\otimes A$, then the natural map
\[
\CR_G(A)\longrightarrow\Hom(G, \Aut(A)/_\mue),\quad [g\mapsto \alpha_g]\longmapsto [g\mapsto [\alpha_g]_{\mue}] 
\]
is surjective.

However, it remains unclear if Rokhlin actions in the sense of \ref{rokprop nawata} can exhibit rigidity like in \ref{rok ue} with respect to multiplier approximate unitary equivalence. Therefore, it is also unclear whether one can always expect a naturally induced bijection between $\quer{\CR}_G(A)$ and the corresponding $\quer{\Hom}$-set, as in \ref{ue range}.

\reme In the unital case of \ref{ue range}, it has been shown in \cite[5.17]{Phi1} that actions with the Rokhlin property are generic. However, such density results are usually not very helpful for finding examples of actions satisfying some predescribed properties, since it is not even clear how large the set of ordinary actions is. For actions on UHF-absorbing classifiable \cstar-algebras, the following consequences of \ref{ue range} should be satisfactory results in that direction:

{
\cor \label{Rok range Robert}
Let $G$ be a finite group and $A$ a \cstar-algebra with $A\cong M_{|G|^\infty}\otimes A$. Assume that $A$ is expressible as an inductive limit of 1-NCCW complexes that have trivial $K_1$-groups. For convenience, let $\Aut_+(\operatorname{Cu}^\sim(A))$ denote the set of those automorphisms $\operatorname{Cu}^\sim(A)\to\operatorname{Cu}^\sim(A)$ that send the class of a strictly positive element in $A$ to a class of a strictly positive element. 
Then the natural map
\[\CR_G(A)\longrightarrow\Hom\Bigl( G, \Aut_+\bigl( \operatorname{Cu}^\sim(A)\bigl) \Bigl),~ [g\mapsto \alpha_g]\longmapsto [g\mapsto\operatorname{Cu}^\sim(\alpha_g)] \]
is surjective and induces a bijection
\[\quer{\CR}_G(A)\longrightarrow\quer{\Hom}\Bigl( G, \Aut\bigl( \operatorname{Cu}^\sim(A) \bigl) \Bigl). \]
}
\begin{proof}
This follows directly from \ref{ue range} combined with Robert's classification theorem \cite[1.0.1]{Robert}.
\end{proof}

{
\cor \label{Rok range Kirchberg} Let $G$ be a finite group and $A$ a unital UCT Kirchberg algebra with $A\cong M_{|G|^\infty}\otimes A$. Then the natural map
\[\CR_G(A)\longrightarrow\Hom\Bigl( G, \Aut\bigl( (K_0(A),[\eins_A],K_1(A) \bigl) \Bigl),~ [g\mapsto \alpha_g]\longmapsto [g\mapsto K_*(\alpha_g)] \]
is surjective and induces a bijection
\[\quer{\CR}_G(A)\longrightarrow\quer{\Hom}\Bigl( G, \Aut\bigl( (K_0(A),[\eins_A],K_1(A) \bigl) \Bigl). \]
}
\begin{proof}
It is well-known that $*$-homomorphisms between Kirchberg algebras are classified via $KL$-theory, see \cite[Section 2.5]{Rordam}.
In particular, it is known that the natural map
\[
\Aut(A)/_\ue \to \set{ \kappa\in KL(A,A)^{-1} ~|~ \kappa_0([\eins_A]_0)=[\eins_A]_0 }
\]
is a group isomorphism. On the other hand, the UCT assumption means that the natural map
\[
KL(A,A)^{-1}\to\Aut((K_0(A),K_1(A)))
\]
is a split-surjective group homomorphism, see \cite[23.11.1]{BlaKK}. (Keep in mind that $KL$ is defined as a quotient of $KK$.) Therefore, the natural map
\[
\Aut(A)/_\ue \to \Aut\bigl( (K_0(A), [\eins_A], K_1(A) \bigl)
\]
is also split-surjective. In particular, group actions of $G$ on the Elliott invariant always lift to homomorphisms $G\to \Aut(A)/_\ue$.
Hence the surjectivity of the map
\[\CR_G(A)\longrightarrow\Hom\Bigl( G, \Aut\bigl( (K_0(A), [\eins_A], K_1(A) \bigl) \Bigl)  \]
follows directly from \ref{ue range}.
The injectivity of the map
\[\quer{\CR}_G(A)\longrightarrow\quer{\Hom}\Bigl( G, \Aut\bigl( (K_0(A),[\eins_A], K_1(A) \bigl) \Bigl) \]
follows from \cite[4.2]{Izumi2}.
\end{proof}

\rem \label{Rok range ohne UCT} It is possible to prove a result similar to \ref{Rok range Kirchberg} that does not require the assumption of the UCT. The following holds:

Let $G$ be a finite group and $A$ a unital Kirchberg algebra with $A\cong M_{|G|^\infty}\otimes A$. Let $\kappa: G\to \set{x\in KK(A,A)^{-1}\ |\ x_0([\eins]_0)=[\eins]_0}$ be a group homomorphism. Then there exists a Rokhlin action $\gamma: G\curvearrowright A$ with $KK(\gamma)=\kappa$.
\begin{proof}
A rough sketch of the proof goes as follows. Using Kirchberg-Phillips classification, one lifts $\kappa$ to a family $\set{\beta_g}_{g\in G}\subset\Aut(A)$ that defines a $G$-action on the level of $KK$. With this family of automorphisms, define the inductive system $\set{A^{(n)\oplus G}, \Phi_n}$ just as in the proof of \ref{range}. Making use of the unique $|G|$-divisibility of $KK(A,B)$ for any separable \cstar-algebra $B$ and plugging in the Milnor sequence (see \cite[21.5.2]{BlaKK}) for the functor $KK( \_\!\_\!\_ ~, B)$, one can calculate that the map
\[ 
j: A\into A_\Phi,\quad x\mapsto \Phi_{1,\infty}\Bigl( ((\beta_g)^{-1}(x))_{g\in G} \Bigl) 
\]
yields a $KK$-equivalence, and is hence asymptotically unitarily equivalent to an isomorphism $\sigma$. One defines the action $\gamma': G\curvearrowright A_\Phi$ just as in the proof of \ref{range} and pulls it back via $\sigma$, i.e. $\gamma = \sigma^{-1}\circ\gamma'\circ\sigma$. Finally, one calculates that $KK(\gamma'_g\circ j) = KK(j\circ\beta_g)$ for all $g\in G$, and hence $KK(\gamma_g)=KK(\beta_g)=\kappa(g)$ for all $g\in G$. 
\end{proof}

{
\cor \label{Rok range TAF} Let $G$ be a finite group and $A$ a separable, unital, simple and nuclear \cstar-algebra with $A\cong M_{|G|^\infty}\otimes A$. Assume that $A$ is TAF and satisfies the UCT.
Then the natural map
\[\CR_G(A)\longrightarrow\Hom\Bigl( G, \Aut\bigl( (K_0(A), K_0(A)^+,[\eins_A], K_1(A) \bigl) \Bigl)  \]
is surjective and induces a bijection
\[\quer{\CR}_G(A)\longrightarrow\quer{\Hom}\Bigl( G, \Aut\bigl( (K_0(A), K_0(A)^+, [\eins_A], K_1(A) \bigl) \Bigl). \]
}
\begin{proof}
It is known from Lin's classification of TAF algebras \cite{LinC} that $*$-homomorphisms between separable, unital, simple and nuclear TAF algebras are classified via $KL$-theory. In particular, the natural map
\[
\Aut(A)/_\ue \to \set{ \kappa\in KL(A,A)^{-1} ~|~ \kappa_0([\eins_A]_0)=[\eins_A]_0,~\kappa_0(K_0(A)^+)= K_0(A)^+ }
\]
is a group isomorphism. On the other hand, the UCT assumption means that the natural map
\[
KL(A,A)^{-1}\to\Aut((K_0(A),K_1(A)))
\]
is a split-surjective group homomorphism, see \cite[23.11.1]{BlaKK}. (Keep in mind that $KL$ is defined as a quotient of $KK$.) Therefore, the natural map
\[
\Aut(A)/_\ue \to \Aut\bigl( (K_0(A), K_0(A)^+, [\eins_A], K_1(A) \bigl)
\]
is also split-surjective. In particular, group actions of $G$ on the Elliott invariant always lift to homomorphisms $G\to \Aut(A)/_\ue$.
Hence the surjectivity of the map
\[\CR_G(A)\longrightarrow\Hom\Bigl( G, \Aut\bigl( (K_0(A), K_0(A)^+, [\eins_A], K_1(A) \bigl) \Bigl)  \]
follows directly from \ref{ue range}. Injectivity of
\[\quer{\CR}_G(A)\longrightarrow\quer{\Hom}\Bigl( G, \Aut\bigl( (K_0(A), K_0(A)^+, [\eins_A], K_1(A) \bigl) \Bigl) \]
follows from \cite[4.3]{Izumi2}.
\end{proof}

\begin{rem} 
We conjecture that the statement analogous to \ref{Rok range TAF} is true for TAI algebras, TASI algebras and most generally the recently classified \cstar-algebras of generalized tracial rank at most one, which exhaust the entire Elliott invariant for weakly unperforated $K_0$, see \cite{GongLinNiu}. 

Let $\Ell$ denote the full Elliott functor. At least the surjectivity part boils down to the question if in the UCT case, the natural map
\[
\Aut(A)/_\ue \longrightarrow \Aut(\Ell(A))
\] 
is split-surjective. In the TAF case, this is fairly clear because the Elliott invariant only involves (ordered) $K$-theory. The UCT then implies  the existence of a splitting map from the Hom-sets of $K$-theory back to $KK$-theory. For TAI algebras, this map seems to be split-sujective as well, see \cite{LinUniq}. In personal communication, Niu has confirmed to us that this map is in fact split-surjective for all separable, unital, nuclear, simple \cstar-algebras with generalized tracial rank at most one and satisfying the UCT.
\end{rem}

Next, we show how \ref{range} helps to obtain a general inductive limit decomposition for crossed products of Rokhlin actions on UHF-absorbing \cstar-algebras.

{
\theorem \label{crossed product decomposition} Let $G$ be a finite group and $A$ a separable \cstar-algebra with $A\cong M_{|G|^\infty}\otimes A$. Assume that $A$ is either unital, stable or has stable rank one. Let $\alpha: G\curvearrowright A$ be a Rokhlin action. Let $A^{(n)}=M_{|G|^{n-1}}\otimes A$ and $\alpha_g^{(n)}=\id_{M_{|G|^{n-1}}}\otimes\alpha_g$ for all $n\in\IN, g\in G$ and consider the inductive system
\[ 
\Theta_n: A^{(n+1)}\to A^{(n+2)} 
\]
defined by
\[ 
\Theta_n\Bigl( \sum_{g,f\in G} e_{g,f}\otimes x_{g,f} \Bigl) = \sum_{g,f\in G} e_{g,f}\otimes \sum_{h\in G} e_{h,h}\otimes \alpha_{h}^{(n)}(x_{gh,fh}).
\]
Then $\dst A_\Theta := \lim_{\longrightarrow} \set{ A^{(n+1)}, \Theta_n }$ is isomorphic to $A\rtimes_\alpha G$. In particular, $A\rtimes_\alpha G$ can be expressed as an inductive limit whose building blocks are isomorphic to $A$.
}
\begin{proof}
Recall the notation from \ref{range} and its proof. For $\beta_g=\alpha_g$, define the connecting maps
\[\Phi_n: A^{(n)\oplus G} \to A^{(n+1)\oplus G},\quad \Phi_n((x_g)_{g\in G})=\Bigl( \sum_{h\in G} e_{h,h}\otimes\alpha_h^{(n)}(x_{gh})\Bigl)_{g\in G}\]
and consider $\dst A_\Phi = \lim_{\longrightarrow}\set{A^{(n)\oplus G},\Phi_n}$. Combining the proof of \ref{range} with \ref{rok ue} yields that $(A,\alpha)$ is conjugate to $(A_\Phi,\gamma)$, where $\gamma: G\curvearrowright A_\Phi$ is given on each building block by
\[ \gamma^{(n)}: G \curvearrowright A^{(n)\oplus G},\quad \gamma^{(n)}_f((x_g)_{g\in G}) = (x_{f^{-1}g})_{g\in G}. \]
Hence it suffices to show that $A_\Phi\rtimes_\gamma G\cong A_\Theta$.

Let $\lambda: G\to \CU(M_{|G|})$ be the left regular representation, i.e. $\lambda(g) = \sum_{h\in G} e_{gh,h}$ for all $g\in G$. For each $n$, define
\[ \phi^{(n)}: A^{(n)\oplus G}\rtimes_{\gamma^{(n)}} G\to A^{(n+1)},\quad \sum_{g\in G} a_gu_g \mapsto \sum_{g\in G} \diag(a_g)\cdot \lambda(g). \]
It is obvious that $\phi^{(n)}$ is an isomorphism, since it is just a variant of the natural covariant representation coming from the left regular representation. Moreover, one can check that the following diagram commutes for all $n$:
\[ 
\xymatrix@C+15mm{ A^{(n)\oplus G}\rtimes_{\gamma^{(n)}} G \ar[r]^{\Phi_n\rtimes G} \ar[d]_{\phi^{(n)}} & A^{(n+1)\oplus G}\rtimes_{\gamma^{(n+1)}} G \ar[d]^{\phi^{(n+1)}} \\
A^{(n+1)} \ar[r]^{\Theta_n} & A^{(n+2)} } 
\]
Hence one gets 
\[  
A_\Phi\rtimes_\gamma G \cong \lim_{\longrightarrow} \set{ A^{(n)\oplus G}\rtimes_{\gamma^{(n)}} G, \Phi_n\rtimes G } \cong \lim_{\longrightarrow} \set{ A^{(n+1)}, \Theta_n} = A_\Theta. 
\]
\end{proof}

With the help of the above model system for Rokhlin actions on UHF-absorbing \cstar-algebras, one also observes quite easily that the crossed product is always isomorphic to the fixed point algebra.

{
\theorem \label{fixed point crossed product} Let $G$ be a finite group and $A$ a separable \cstar-algebra with $A\cong M_{|G|^\infty}\otimes A$. Assume that $A$ is either unital, stable or has stable rank one. Let $\alpha: G\curvearrowright A$ be a Rokhlin action. Then $A^\alpha\cong A\rtimes_\alpha G$.
}
\begin{proof}
Recall the notation for $A^{(n)}, \alpha^{(n)}_g, \gamma^{(n)}$ and the maps $\Phi_n: A^{(n)\oplus G}\to A^{(n+1)\oplus G},~\Theta_n: A^{(n+1)}\to A^{(n+2)}$ from \ref{range} and \ref{crossed product decomposition}. We have seen that
\[
(A,\alpha) = (A_\Phi,\gamma) = \lim_{\longrightarrow} \set{(A^{(n)\oplus G},\gamma^{(n)}),\Phi_n}
\]
and
\[ 
A\rtimes_\alpha G \cong A_\Theta = \lim_{\longrightarrow} \set{A^{(n+1)}, \Theta_n} .
\]
From the first (equivariant) inductive limit decomposition, it follows that
\[
A^\alpha \cong \lim_{\longrightarrow} \set{ (A^{(n)\oplus G})^{\gamma^{(n)}}, \Phi_n} .
\]
The building blocks are obviously just isomorphic copies of $A^{(n)}$ as every element $(x_g)_g\in A^{(n)\oplus G}$, which is fixed by the shift action $\gamma^{(n)}$, has the same entry everywhere. The connecting map $\Phi_n$ then restricts to
\[ 
\Omega_n : A^{(n)}\to A^{(n+1)},\quad \Omega_n(x) = \sum_{h\in G} e_{h,h}\otimes\alpha_h^{(n)}(x). 
\]
Let $\sigma$ be the flip automorphism on $M_{|G|}\otimes M_{|G|}$. Recall also the right-regular representation $\rho: G\to\CU(M_{|G|})$ given by $\rho(g)=\sum_{h\in G} e_{hg^{-1},h}$. Then we see for every $n\in\IN$ and $x=(x_{g,f})_{g,h\in G}\in A^{(n+1)}$ that
\[ 
\begin{array}{ccl}
\multicolumn{3}{l}{ \Theta_n(x) } \\\\
&=& \dst\sum_{g,f\in G} e_{g,f}\otimes \sum_{h\in G} e_{h,h}\otimes\alpha_h^{(n)}(x_{gh,fh}) \\\\
&=& \dst(\sigma\otimes\id_{A^{(n)}})\left( \sum_{h\in G} e_{h,h}\otimes \sum_{g,f\in G} e_{g,f}\otimes \alpha_h^{(n)}(x_{gh,fh}) \right) \\\\
&=& \dst(\sigma\otimes\id_{A^{(n)}})\left( \sum_{h\in G} e_{h,h}\otimes\alpha_h^{(n+1)}\Bigl( \sum_{g,f\in G} e_{gh^{-1},fh^{-1}}\otimes x_{g,f} \Bigl) \right) \\\\
&=& \dst(\sigma\otimes\id_{A^{(n)}})\left( \sum_{h\in G} e_{h,h}\otimes\alpha_h^{(n+1)}\circ\ad\bigl( \rho(h)\otimes\eins_{\CM(A^{(n)})} \bigl)\Bigl( \sum_{g,f\in G} e_{g,f}\otimes x_{g,f} \Bigl) \right) \\\\
&=& \dst(\sigma\otimes\id_{A^{(n)}})\circ\ad\Bigl(\sum_{h\in G} e_{h,h}\otimes\rho(h)\otimes\eins_{\CM(A^{(n)})}\Bigl)\circ\Omega_{n+1}(x)
.\end{array}
\]
Now the flip automorphism $\sigma$ is well-known to be inner. The unitary implementing it is
\[ \sum_{g,h\in G} e_{g,h}\otimes e_{h,g}\quad\in M_{|G|}\otimes M_{|G|}. \]
In particular, $\Theta_n$ is unitarily equivalent to $\Omega_{n+1}$ for all $n$.
By \ref{intertwining}(2), this yields an isomorphism between the inductive limits, so in particular between $A^\alpha$ and $A\rtimes_\alpha G$.
\end{proof}


\section{Local representability of duals of Rokhlin actions on UHF-absorbing \cstar-algebras}
\noindent
In this section, we look at actions of finite abelian groups. Particularly, we take a look at the class of Rokhlin actions treated in the previous section, and will describe the dual actions in terms of local representability.

\rem If $B\subset A$ is a non-degenerate sub-\cstar-algebra, then one can see easily for every $b,c\in A_\infty\cap B'$ that $b+\ann(B, A_\infty)=c+\ann(B, A_\infty)$ implies $ba=ca$ and $ab=ac$ for all $a\in A$. Thus it makes sense to multiply elements of $F_\infty(B,A)$ with elements of $A$ to obtain elements in $A_\infty$. For example,
\[
A\longrightarrow A_\infty,\quad a\mapsto cac^*
\]
yields a well-defined c.p.c.~map for all contractions $c\in F_\infty(B,A)$. If $c$ happens to be a unitary in $F_\infty(B,A)$, then this defines a $*$-homomorphism, whose restriction to $B$ is just the canonical inclusion $B\subset A\into A_\infty$.

\begin{defi}[cf.~{\cite[Section 3.2]{Izumi}} and {\cite[Section 4]{Nawata}}] \label{lr}
Let $A$ be a separable \cstar-algebra, $G$ a finite abelian group and $\alpha: G\curvearrowright A$ an action. 
\begin{enumerate}
\item $\alpha$ is called approximately representable, if there is a unitary representation $w: G\to F_\infty(A^\alpha,A^\alpha)\subset F_\infty(A^\alpha,A)$ with $\alpha_g(a) = w(g)aw(g)^*$ for all $a\in A$ and $g\in G$.
\item $\alpha$ is called locally representable, if there exists a sequence of \cstar-subalgebras $A_n\subset A$ with $A_n\subset A_{n+1}$ and $A=\quer{\bigcup_n A_n}$ and unitary representations $w_n: G\to\CU(\CM(A_n))$ such that
$$\alpha_g|_{A_n} = \ad(w_n(g))\quad\text{for all}\; g\in G\;\text{and}\; n\in\IN.$$
\item Let $\FC$ be a class of separable \cstar-algebras. $\alpha$ is called locally $\FC$-representable, if it is locally representable and the $A_n$ from (2) may be chosen to be isomorphic to a \cstar-algebra in $\FC$.
\end{enumerate}
\end{defi}

Recall the following duality between Rokhlin actions and approximately representable actions:

\begin{theorem}[see {\cite[3.8]{Izumi}} and {\cite[4.4]{Nawata}}] \label{duality}
Let $\alpha$ be an action of a finite abelian group $G$ on a separable \cstar-algebra $A$. Then
\begin{itemize}
\item[(i)] $\alpha$ has the Rokhlin property iff $\hat{\alpha}$ is approximately representable.
\item[(ii)] $\alpha$ is approximately representable iff $\hat{\alpha}$ has the Rokhlin property.
\end{itemize}
\end{theorem}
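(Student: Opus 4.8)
The plan is to prove (i) directly by a Fourier transform between the function algebra $\CC(G)$ and the dual group $\hat{G}$, and then to obtain (ii) from (i) applied to the dual action together with Takai duality. Throughout I write $A\rtimes_\alpha G$ as the algebra generated by $A$ and a unitary family $\set{u_g}_{g\in G}$ with $u_g a u_g^* = \alpha_g(a)$ and $u_gu_h=u_{gh}$, so that the dual action of $\hat{G}$ is determined by $\hat{\alpha}_\chi|_A=\id_A$ and $\hat{\alpha}_\chi(u_g)=\chi(g)u_g$. A direct computation shows that the fixed point algebra $(A\rtimes_\alpha G)^{\hat{\alpha}}$ equals the canonical copy of $A$, which is the identification that makes the two properties in the statement comparable.

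For the forward direction of (i), assume $\alpha$ has the Rokhlin property and fix, via \ref{rokprop nawata}, a partition of unity $\set{e_g}_{g\in G}$ by projections in $F_\infty(A)=F_\infty(A,A)$ satisfying $\alpha_{g,\infty}(e_h)=e_{gh}$. I would then set $v_\chi:=\sum_{g\in G}\chi(g)e_g$ for each $\chi\in\hat{G}$. One checks that $\chi\mapsto v_\chi$ is a unitary representation of $\hat{G}$ in $F_\infty(A,A)$ and that $\alpha_{g,\infty}(v_\chi)=\overline{\chi(g)}v_\chi$. Viewing the $v_\chi$ inside $F_\infty(A,A\rtimes_\alpha G)$, commutation with $A$ gives $v_\chi a v_\chi^*=a$ for $a\in A$, while the relation $u_g v_\chi^* = \chi(g)v_\chi^* u_g$ (the eigenvalue equation conjugated by $u_g$) yields $v_\chi u_g v_\chi^*=\chi(g)u_g$. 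Hence $\set{v_\chi}$ implements $\hat{\alpha}$, so $\hat{\alpha}$ is approximately representable. For the converse, assume $\hat{\alpha}$ is approximately representable and take an implementing unitary representation $w\colon\hat{G}\to F_\infty(A,A)\subset F_\infty(A,A\rtimes_\alpha G)$. Applying the inverse Fourier transform, I would define $e_g:=\tfrac{1}{|G|}\sum_{\chi\in\hat{G}}\overline{\chi(g)}w(\chi)$. Orthogonality of characters shows that $\set{e_g}$ is a family of pairwise orthogonal projections summing to $w(1)=1$, hence a partition of unity in $F_\infty(A)$. From $\chi(g)u_g=w(\chi)u_gw(\chi)^*$ one extracts $\alpha_{g,\infty}(w(\chi))=\overline{\chi(g)}w(\chi)$, and the same Fourier computation run backwards gives $\alpha_{g,\infty}(e_h)=e_{gh}$. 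This is precisely an equivariant unital embedding $(\CC(G),G\text{-shift})\to(F_\infty(A),\alpha_\infty)$, i.e.\ the Rokhlin property of $\alpha$.

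Finally, (ii) I would deduce by applying the now-proved statement (i) to the $\hat{G}$-action $\hat{\alpha}$ on $A\rtimes_\alpha G$: this gives that $\hat{\alpha}$ has the Rokhlin property if and only if the double dual $\hat{\hat{\alpha}}$ is approximately representable. By Takai duality, $(A\rtimes_\alpha G)\rtimes_{\hat{\alpha}}\hat{G}\cong A\otimes M_{|G|}$ and, under the canonical identification $\hat{\hat{G}}\cong G$, the action $\hat{\hat{\alpha}}$ is conjugate to $\alpha$ stabilized by the adjoint of the regular representation. The remaining point is that approximate representability is invariant under this $M_{|G|}$-stabilization, after which $\hat{\hat{\alpha}}$ approximately representable is equivalent to $\alpha$ approximately representable, and (ii) follows.

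The main obstacle I anticipate is the bookkeeping in the relative central sequence algebras—verifying that the Fourier-transformed elements genuinely lie in $F_\infty(A,A)$ yet implement $\hat{\alpha}$ when regarded inside $F_\infty(A,A\rtimes_\alpha G)$—together with the non-unital subtleties (passing to multipliers, the identification of the fixed point algebra) and the stabilization-invariance of approximate representability needed to pass from (i) to (ii). Once these are handled, both equivalences reduce to the elementary fact that Fourier transform interchanges the point-mass basis of $\CC(G)$ with the character basis of $C^*(\hat{G})$, carrying Rokhlin partitions of unity to implementing unitary representations and back.
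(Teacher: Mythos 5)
The paper does not actually prove this statement; it is quoted verbatim from Izumi's and Nawata's work (\cite[3.8]{Izumi}, \cite[4.4]{Nawata}), so there is no internal proof to compare against. Measured against the standard argument in those references, your treatment of (i) is essentially the canonical one and is correct: the Fourier transform $v_\chi=\sum_g\chi(g)e_g$ converts a Rokhlin partition of unity into a unitary representation of $\widehat{G}$ with $\alpha_{g,\infty}(v_\chi)=\overline{\chi(g)}v_\chi$, which implements $\hat{\alpha}$ once one checks (as you indicate) that the natural map $F_\infty(A,A)\to F_\infty(A,A\rtimes_\alpha G)$ is injective and that multiplication against $a u_g$ is legitimate because $A\subset A\rtimes_\alpha G$ is non-degenerate; the inverse Fourier transform gives the converse. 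These bookkeeping points, together with the identification $(A\rtimes_\alpha G)^{\hat{\alpha}}=A$, all go through.

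The one genuine gap is in your derivation of (ii). You reduce it, via (i) applied to $\hat{\alpha}$ and Takai duality, to the claim that approximate representability of $\alpha\otimes\ad\rho$ on $A\otimes M_{|G|}$ is equivalent to that of $\alpha$ on $A$, and you leave this unproved. It is not a formality: the fixed point algebras $(A\otimes M_{|G|})^{\alpha\otimes\ad\rho}$ and $A^\alpha$ are genuinely different, and Definition \ref{lr}(1) requires the implementing unitaries to lie in the relative central sequence algebra of the fixed point algebra, so the equivalence needs an argument (it is a separate lemma in Izumi's paper). A cleaner route that avoids stabilization entirely is to prove (ii) directly and symmetrically: given $w\colon G\to F_\infty(A^\alpha,A^\alpha)$ implementing $\alpha$, the elements $v_g:=w(g)^*u_g$ commute with $A$ (both factors implement $\alpha_g$) and with the $u_h$ (since $w(g)$ is represented by sequences in $A^\alpha$, hence is $\alpha_{h,\infty}$-invariant), satisfy $v_gv_h=v_{gh}$ by commutativity of $G$, and obey $\hat{\alpha}_\chi(v_g)=\overline{\chi(g)}v_g$; their spectral projections $f_\chi=\frac{1}{|G|}\sum_g\chi(g)v_g$ then form the required Rokhlin partition of unity for $\hat{\alpha}$ in $F_\infty(A\rtimes_\alpha G)$, and the converse is again the inverse Fourier transform. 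Either supply the stabilization-invariance lemma or substitute this direct argument; as written, (ii) is incomplete.
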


For Rokhlin actions on UHF-absorbing \cstar-algebras, we can partially strengthen the assertion about its dual action:

{
\theorem \label{local rep} Let $G$ be a finite abelian group and $A$ a separable \cstar-algebra with $A\cong M_{|G|^\infty}\otimes A$. Assume that $A$ is either unital, stable, or has stable rank one. Let $\alpha: G\curvearrowright A$ be a Rokhlin action. Then $\hat{\alpha}: \widehat{G}\curvearrowright A\rtimes_\gamma G$ is locally $\set{A}$-representable.
}
\begin{proof}
Recall the notation from \ref{range} and \ref{crossed product decomposition}. In particular, the definition of the connecting maps
\[
\Phi_n: A^{(n)\oplus G} \to A^{(n+1)\oplus G}\quad\text{and}\quad \Theta_n: A^{(n+1)}\to A^{(n+2)}.
\]
Combining the proof of \ref{range} with \ref{rok ue} yields that $(A,\alpha)$ is conjugate to $(A_\Phi,\gamma)$, where $\gamma: G\curvearrowright A_\Phi$ is given on each building block by
\[
\gamma^{(n)}: G \curvearrowright A^{(n)\oplus G},\quad \gamma^{(n)}_f((x_g)_{g\in G}) = (x_{f^{-1}g})_{g\in G}. \]
Moreover, recall the isomorphisms 
\[
\phi^{(n)}: A^{(n)\oplus G}\rtimes_{\gamma^{(n)}} G\to A^{(n+1)},\quad \sum_{g\in G} a_gu_g \mapsto \sum_{g\in G} \diag(a_g)\cdot S(g).
\]
that made the following diagram commutative for all $n$:
\[
\xymatrix@C+15mm{ A^{(n)\oplus G}\rtimes_{\gamma^{(n)}} G \ar[r]^{\Phi_n\rtimes G} \ar[d]_{\phi^{(n)}} & A^{(n+1)\oplus G}\rtimes_{\gamma^{(n+1)}} G \ar[d]^{\phi^{(n+1)}} \\
A^{(n+1)} \ar[r]^{\Theta_n} & A^{(n+2)} }
\]
For each $n$, we define the action $\delta^{(n)}: \widehat{G}\curvearrowright A^{(n+1)}$ induced by the unitary representation
\[ 
\widehat{G}\to \CU(\CM(A^{(n+1)})), \quad \chi\mapsto U(\chi)\otimes\eins_{\CM(A^{(n)})}\quad\text{with}\quad U(\chi)=\sum_{h\in G} \chi(h)\cdot e_{h,h}. 
\]
As one easily checks that
\[ 
U(\chi) \lambda(g) U(\chi)^* = \chi(g)\cdot \lambda(g)\quad\text{for all}~g\in G,~\chi\in\widehat{G},
\]
one gets that the diagram
\[ 
\xymatrix@C+15mm{ A^{(n)\oplus G}\rtimes_{\gamma^{(n)}} G \ar[r]^{\widehat{\gamma^{(n)}}_\chi } \ar[d]_{\phi^{(n)}} & A^{(n)\oplus G}\rtimes_{\gamma^{(n)}} G \ar[d]^{\phi^{(n)}} \\
A^{(n+1)} \ar[r]^{\delta^{(n)}_\chi} & A^{(n+1)} } 
\]
commutes for all $n$ and $\chi\in\widehat{G}$. In particular, the building block actions $\delta^{(n)}$ give rise to an action $\delta: \widehat{G}\curvearrowright A_\Theta$.

The two commuting diagrams above yield equivariant inductive limit decompositions
\[
\begin{array}{ccl}
(A_\Phi\rtimes_\gamma G, \hat{\gamma}) 
&\cong& \dst\lim_{\longrightarrow} \set{ (A^{(n)\oplus G}\rtimes_{\gamma^{(n)}} G, \widehat{\gamma^{(n)}}), \Phi_n\rtimes G } \vspace{1mm}\\
&\cong& \dst\lim_{\longrightarrow} \set{ (A^{(n+1)},\delta^{(n)}), \Theta_n }\vspace{1mm}\\
&\cong& (A_\Theta, \delta). 
\end{array}
\]
Since $(A,\alpha)\cong (A_\Phi,\gamma)$, the action $\hat{\alpha}$ is conjugate to $\delta$. As $\delta$ is clearly locally $\set{A}$-representable, this finishes the proof.
\end{proof}


\section{Examples and applications}
\noindent
In this section, we give some applications to the results that have been proved in the previous sections.

Recall the following useful result due to Izumi, which simplifies the computation of the $K$-theory of crossed products by finite group actions with the Rokhlin property:

\begin{theorem}[see {\cite[3.13]{Izumi}}] \label{Rok K-theory}
Let $A$ be a simple unital \cstar-algebra, $G$ a finite group and $\alpha: G\curvearrowright A$ a Rokhlin action. Then
\[K_i(A^\alpha) = \bigcap_{g\in G} \ker(\id-K_i(\alpha_g))\quad\text{for}~i=0,1 \]
and the inclusion $A^\alpha\into A$, on the level of $K$-theory, coincides with the canonical inclusion of this subgroup.
\end{theorem}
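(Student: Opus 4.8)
The plan is to use the Rokhlin projections to manufacture a single unital $*$-homomorphism from $A$ into the fixed-point sequence algebra that serves as a one-sided inverse to the inclusion $\iota\colon A^\alpha\into A$ on $K$-theory, and then to read off both the injectivity and the image of $K_i(\iota)$ from it.

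First I would fix the ingredients. Since $A$ is unital, $\ann(A,A_\infty)=0$, so $F_\infty(A)=A_\infty\cap A'$, and the Rokhlin property of \ref{rokprop nawata} provides genuine orthogonal projections $\set{e_g}_{g\in G}\subset A_\infty\cap A'$ with $\sum_{g\in G}e_g=\eins$ and $\alpha_{g,\infty}(e_h)=e_{gh}$. Because each $e_g$ commutes with $A$ and the $e_g$ are orthogonal, the formula
\[
\Phi\colon A\to A_\infty,\qquad \Phi(a)=\sum_{g\in G}e_g\,\alpha_g(a)
\]
defines a unital $*$-homomorphism. A direct computation using $\alpha_{k,\infty}(e_g)=e_{kg}$ shows $\alpha_{k,\infty}\circ\Phi=\Phi$, so $\Phi$ takes values in $(A_\infty)^{\alpha_\infty}$; averaging by the conditional expectation $E=\tfrac1{|G|}\sum_g\alpha_g$ identifies $(A_\infty)^{\alpha_\infty}=(A^\alpha)_\infty$, whence $\Phi\colon A\to (A^\alpha)_\infty$. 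The two facts I would record are: (i) $\Phi\circ\iota=c_{A^\alpha}$ exactly, where $c_B\colon B\to B_\infty$ denotes the constant embedding (this uses $\alpha_g|_{A^\alpha}=\id$ and $\sum e_g=\eins$); and (ii) for any $\alpha$-fixed class $x\in K_i(A)$ one has $K_i(\Phi)(x)=K_i(c_A)(x)$ inside $K_i(A_\infty)$, which follows by writing $\Phi$ as the orthogonal sum of the corner maps $a\mapsto e_g\alpha_g(a)$ and using that cutting by the central projections $e_g$ sums to the identity on $K$-theory.

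With $\Phi$ in hand the theorem falls out. The containment $\im K_i(\iota)\subseteq\bigcap_{g\in G}\ker(\id-K_i(\alpha_g))$ is immediate from $\alpha_g\circ\iota=\iota$. Injectivity of $K_i(\iota)$ follows from (i), since $K_i(\Phi)\circ K_i(\iota)=K_i(c_{A^\alpha})$ and the constant embedding induces an injection on $K$-theory for separable \cstar-algebras. For the reverse containment, take an $\alpha$-fixed $x\in K_i(A)$; applying $\Phi$ to a projection (resp.\ unitary) representing $x$ yields a genuine projection (resp.\ unitary) in a matrix algebra over $(A^\alpha)_\infty$, which lifts to a bounded sequence of projections (resp.\ unitaries) $(q_n)$ over $A^\alpha$. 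By (ii) the classes $[(\iota(q_n))_n]$ and $K_i(c_A)(x)$ agree in $K_i(A_\infty)$, and unpacking equality in the sequence algebra produces an honest Murray--von Neumann equivalence (resp.\ homotopy) between $\iota(q_n)$ and the chosen representative of $x$ for all large $n$. Hence $K_i(\iota)[q_n]_{A^\alpha}=x$ for large $n$, giving surjectivity onto the fixed subgroup. As this isomorphism is realised by $K_i(\iota)$ itself, the inclusion indeed coincides with the canonical inclusion of the subgroup.

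The main obstacle is the book-keeping at the interface between the sequence algebra and the honest algebras: one must know that the constant embedding $c_B$ is $K$-theoretically injective and, more delicately, that an equality of two sequences of projections (unitaries) in $K_*(A_\infty)$ forces an eventual honest equivalence (homotopy) in matrix algebras over $A$. Both are standard consequences of separability, but they are the technical heart of the argument, whereas the construction and equivariance of $\Phi$ are routine. I note that unitality enters only to guarantee that the Rokhlin elements may be taken to be honest projections and that $\eins\in A^\alpha$, while simplicity plays no essential role in this particular $K$-theoretic conclusion beyond what the Rokhlin property already supplies.
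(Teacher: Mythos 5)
The paper does not actually prove this statement: it is quoted verbatim from Izumi (\cite[3.13]{Izumi}) and used as a black box, so there is no in-text argument to compare yours against. Your blind proof is correct and is essentially the standard argument for this result: the averaging homomorphism $\Phi(a)=\sum_{g}e_g\alpha_g(a)$ landing in $(A_\infty)^{\alpha_\infty}=(A^\alpha)_\infty$, the exact identity $\Phi\circ\iota=c_{A^\alpha}$, the decomposition $K_i(\Phi)=\sum_g K_i(a\mapsto e_ga)\circ K_i(\alpha_g)$ via orthogonal ranges (which collapses to $K_i(c_A)$ on $\alpha$-fixed classes because $\sum_g e_g=\eins$), and then the routine lifting of projections, unitaries and discretized homotopies out of the sequence algebra. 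Two minor remarks: the $e_g$ are not central in $A_\infty$, only relative commutants of the constant copy of $A$, which is all your orthogonal-sum computation actually needs; and your closing observation that simplicity plays no role is consistent with the paper's own Remark \ref{Rok K-theory UHF}, which records the non-unital, non-simple analogue in the UHF-absorbing case.
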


The following recovers and generalizes \cite[6.3.4]{Blackadar} by combining \ref{Rok range TAF} with known classification results.

\example Let $(G_0,G_0^+,u)$ be a countable, uniquely 2-divisible scaled ordered abelian group, which is weakly unperforated and has the Riesz interpolation property. Let $\sigma$ be a scaled ordered group automorphism of order 2 on $G_0$, such that $\ker(\id-\sigma)$ is isomorphic to $(\IZ[\frac{1}{2}],\IZ[\frac{1}{2}]^+,1)$ as an ordered group with order unit.
Let $G_1$ be any countable, uniquely 2-divisible abelian group. Then there exists a $\IZ_2$-action $\gamma$ on $M_{2^\infty}$ such that $M_{2^\infty}^\gamma$ is a simple UCT TAF \cstar-algebra with $K_0(M_{2^\infty}^\gamma)\cong G_0$ as ordered groups and $K_1(M_{2^\infty}^\gamma)\cong G_1$. Moreover, $\gamma$ is locally $\set{M_{2^\infty}^\gamma}$-representable.
\begin{proof}
By combining the range result \cite{EllGong} with \cite{LinAH}, there exists a separable, unital, nuclear, simple \cstar-algebra $A$ with tracial rank zero and satisfying the UCT such that
\[ \bigl( K_0(A), K_0(A)^+, [\eins_A], K_1(A) \bigl) = \bigl( G_0,G_0^+,u, G_1 \bigl). \] 
We make use of the classification theorem of \cite{LinC}. As the $K$-groups of $A$ are uniquely 2-divisible, we have $A\cong M_{2^\infty}\otimes A$.

Consider the $\IZ_2$-action on the above quadruple of $A$ whose $K_0$-part agrees with $\sigma$, and the $K_1$-part acts as multiplication by $-1$. By \ref{Rok range TAF}, we can lift this to a Rokhlin action $\alpha: \IZ_2\curvearrowright A$ with $K_0(\alpha)=\sigma$ and $K_1(\alpha)=-\id$. Applying \ref{Rok K-theory} yields that the Elliott invariant of $A\rtimes_\alpha\IZ_2$ is the same as that of the CAR algebra $M_{2^\infty}$. As taking crossed products with Rokhlin actions preserves the property of being TAF (see \cite[2.6]{Phi}) and satisfying the UCT (see \cite[Section 5]{Santiago}), $A\rtimes_\alpha\IZ_2$ is again TAF and satisfies the UCT, and hence $A\rtimes_\alpha\IZ_2\cong M_{2^\infty}$ by \cite{LinC}.
Under this identification, the dual action $\hat{\alpha}$ gives rise to a locally $\set{A}$-representable (see \ref{local rep}) action $\gamma: \IZ_2\curvearrowright M_{2^\infty}$ with
\[ M_{2^\infty}^\gamma \cong (A\rtimes_\alpha\IZ_2)^{\hat{\alpha}} \cong A. \]
\end{proof}

\reme An analogous statement is true for actions of any finite abelian group. We state it in the next example for $\IZ_p$ instead of $\IZ_2$. As the proof is very similar to the above, we omit it.

\begin{example} \label{general Blackadar} 
Let $p\geq 2$ be a natural number. Let $(G_0,G_0^+,u)$ be a countable, uniquely $p$-divisible ordered abelian group with order unit, which is weakly unperforated and has the Riesz interpolation property. Let $\sigma_0$ be an ordered group automorphism of order $p$ on $G_0$, such that $\ker(\id-\sigma_0)$ is isomorphic to $(\IZ[\frac{1}{p}],\IZ[\frac{1}{p}]^+,1)$ as an ordered group with order unit.
Let $G_1$ be a countable, uniquely $p$-divisible abelian group with an order $p$ automorphism over $\sigma_1$ such that $\ker(\id-\sigma_1)=0$. Then there exists a $\IZ_p$-action $\gamma$ on $M_{p^\infty}$ such that $M_{p^\infty}^\gamma$ is a simple UCT TAF \cstar-algebra with $K_0(M_{p^\infty}^\gamma)\cong G_0$ as ordered groups and $K_1(M_{p^\infty}^\gamma)\cong G_1$. Moreover, $\gamma$ can be chosen to be locally $\set{M_{p^\infty}^\gamma}$-representable.
\end{example}

Next, we shift our focus in the direction of UHF-absorbing (UCT) Kirchberg algebras. As it turns out, the range results of \cite[4.8(3), 4.9]{Izumi} and \cite[6.4]{Izumi2}
about approximately representable actions of cyclic groups of prime power cardinality on $\CO_2$ can be recovered and extended by combining \ref{Rok range Kirchberg} with Kirchberg-Phillips classification. (We note, however, that in the following more general cases, this does not necessarily cover the whole range.)
Moreover, just as for \ref{general Blackadar}, an analogous result holds for all finite abelian groups, but we only state it for the cyclic case:

\example \label{general Izumi} Let $p\geq 2$ be a natural number. For $i=0,1$, let $G_i$ be a countable, uniquely $p$-divisible abelian group with an order $p$ automorphism $\sigma_i\in\Aut(G_i)$ satisfying $\ker(\id-\sigma_i)=0$. Then there exists an action $\gamma: \IZ_p\curvearrowright\CO_2$ such that $\CO_2^\gamma$ is a unital UCT Kirchberg algebra with
\[
K_i(\CO_2^\gamma)\cong K_i(\CO_2\rtimes_\gamma\IZ_p)\cong G_i.
\]
Moreover, $\gamma$ is locally $\set{\CO_2^\gamma}$-representable.
The automorphisms $\sigma_i$ correspond to $K_i(\hat{\gamma})$ under the above $K$-theory isomorphism.
\begin{proof}
Let $A$ be a unital UCT Kirchberg algebra in Cuntz standard form with $K_i(A)\cong G_i$ for $i=0,1$. As the $G_i$ are uniquely $p$-divisible, we have $A\cong A\otimes M_{p^\infty}$. By \ref{Rok range Kirchberg}, we can find a Rokhlin action $\alpha: \IZ_p\curvearrowright A$ such that $K_i(\alpha)=\sigma_i$. By \ref{Rok K-theory}, we obtain $K_i(A\rtimes_\alpha\IZ_p)=0$. Moreover, since $\alpha$ has the Rokhlin property, $A\rtimes_\alpha\IZ_p$ is a unital UCT Kirchberg algebra. This implies that $\CO_2\cong A\rtimes_\alpha\IZ_p$. Under this identification, the dual action $\hat{\alpha}$ gives rise to a locally $\set{A}$-representable (see \ref{local rep}) action $\gamma: \IZ_p\curvearrowright \CO_2$ with
\[ \CO_2^\gamma \cong (A\rtimes_\alpha\IZ_p)^{\hat{\alpha}} \cong A \]
and
\[ \CO_2\rtimes_\gamma\IZ_p \cong (A\rtimes_\alpha\IZ_p)\rtimes_{\hat{\alpha}}\IZ_p \cong M_p\otimes A \cong A .\]
Moreover, $K_i(\hat{\gamma})$ corresponds to $K_i(\hat{\hat{\alpha}})=K_i(\alpha)$, which corresponds to $\sigma_i$.
\end{proof}

\example \label{Izumi action}
A special case of the above is $G_0=\IZ[\frac{1}{2}]$ with $\sigma_0=-\id$ and $G_1=0$. This yields a locally UCT Kirchberg-representable action $\gamma$ on $\CO_2$ with $\CO_2\rtimes_\gamma\IZ_2\cong M_{2^\infty}\otimes\Ost$. This action actually coincides with the action constructed in \cite[4.7]{Izumi}.

\reme Example \ref{general Izumi} can be generalized further, as one may delete the UCT assumption with the help of \ref{Rok range ohne UCT} to cover an (at least a priori) greater class of possible fixed point algebras. As this is not entirely obvious, we sketch how this can be done.

\rem \label{crossed product KK} Let $G$ be a finite group and let $A$ be a separable, nuclear \cstar-algebra with $A\cong M_{|G|^\infty}\otimes A$. Assume that $A$ is either unital, stable or has stable rank one. Let $\gamma: G\curvearrowright A$ be a Rokhlin action and let $\kappa: G\to KK(A,A)^{-1},~\kappa(g)=KK(\gamma_g)$ be the induced homomorphism to $KK$. For every separable \cstar-algebra $B$, we have an isomorphism
\[
KK(A\rtimes_\gamma G, B) \cong \set{ x\in KK(A,B)~|~ \kappa(g)\otimes x = x~\text{for all}~g\in G}
\]
of abelian groups.
\begin{proof}
We omit the most technical details. The key to proving this is \ref{crossed product decomposition}. If one plugs in the Milnor sequence (see \cite[21.5.2]{BlaKK}) for the functor $KK( \_\!\_\!\_ ~, B)$, one can describe the abelian group $KK(A\rtimes_\gamma G, B)$ as the stationary inverse limit
\[
KK(A\rtimes_\gamma G, B) \cong \lim_{\longleftarrow} \set{ KK(A,B), \psi},
\]
where $\psi(x) = \sum_{g\in G} \kappa(g)\otimes x$. For this argument, one also has to use unique $|G|$-divisibility to see that the image of $\psi^2$ is equal to the image of $\psi$ (since $\psi^2=|G|\cdot\psi$), in order to deduce that $\dst{\lim_{\longleftarrow}}^1\set{KK(A,B), \psi}$ vanishes for all $B$.
Using unique $|G|$-divisibility of the group $KK(A,B)$ one more time, one can see that this limit yields the group in the assertion.
\end{proof}

\example 
Let $p\geq 2$ be a natural number.
Let $A$ be a unital Kirchberg algebra in Cuntz standard form with $A\cong M_{p^\infty}\otimes A$.  Assume that $\kappa\in KK(A,A)$ is an element satisfying the equations $1=\kappa^p$ and $0 = \sum_{j=0}^{p-1}\kappa^j$. Then there exists a locally $\set{A}$-representable action $\gamma: \IZ_p\curvearrowright\CO_2$ such that $\CO_2^\gamma\cong \CO_2\rtimes_\gamma\IZ_p\cong A$ and such that $KK(\hat{\gamma})=\kappa$.
\begin{proof}
We use \ref{Rok range ohne UCT} to find a Rokhlin action $\alpha: \IZ_p\curvearrowright A$ with $KK(\alpha)=\kappa$. The previous Remark \ref{crossed product KK} allows one to compute the $KK$-groups for the crossed product $A\rtimes_\alpha\IZ_p$. We have 
\[ 
KK( A\rtimes_\alpha\IZ_p, B)\cong\set{ x\in KK(A,B)~|~ \kappa\otimes x = x}
\]
as abelian groups for every separable \cstar-algebra $B$.
 If one keeps in mind the second equality for $\kappa$ from above, one obtains
\[ x = p\cdot \frac{x}{p} =  \Bigl( \sum_{j=0}^{p-1} \kappa^j \Bigl)\cdot \frac{x}{p} = 0 \]
for all $x\in KK(A\rtimes_\alpha\IZ_p, A\rtimes_\alpha\IZ_p)$. Hence we have $A\rtimes_\alpha\IZ_p\cong\CO_2$. The action $\gamma$ that corresponds to $\hat{\alpha}$ under this identification, is locally $\set{A}$-representable by \ref{local rep}. We have
\[\CO_2^\gamma \cong \bigl( A\rtimes_\alpha\IZ_p \bigl)^{\hat{\alpha}} \cong A \]
and
\[ \CO_2\rtimes_\gamma\IZ_p \cong (A\rtimes_\alpha\IZ_p)\rtimes_{\hat{\alpha}}\IZ_p \cong M_p\otimes A \cong A.\]
Moreover, we see that $KK(\hat{\gamma})=KK(\hat{\hat{\alpha}})=KK(\alpha)=\kappa$.
\end{proof}

\defi Let $\FC_R$ be the class of \cstar-algebras that are expressible as an inductive limit of 1-dimensional NCCW complexes with trivial $K_1$-groups. Note that \cite{Robert} provides a classification result for $\FC_R$. In particular, \cite[6.2.4]{Robert} reduces the classifying invariant in the simple case to the $K_0$-group, the tracial simplex and the pairing between these.

\rem[compare to {\cite[5.6]{Nawata}}] 
A result of Elliott \cite[5.2.1 and 5.2.2]{Ell} shows that there exists a simple, stably projectionless \cstar-algebra $B$ in $\FC_R$ with 
\[ \bigl( K_0(B), (T(B), T_1(B)), r_B \bigl) = \bigl( \IZ, (\IR^+, \emptyset), 0 \bigl). \]
By \cite[5.2]{Nawata2}, there exists a hereditary subalgebra $\Bst$ of $B$ such that $\Bst$ has a unique tracial state $\tau$ and no unbounded traces. Since $\Bst$ is stably isomorphic to $B$ by Brown’s theorem, we have
\[ \bigl( K_0(\Bst), (T(\Bst), T_1(\Bst)), r_{\Bst} \bigl) = \bigl( \IZ, (\IR^+, \set{\tau}), 0 \bigl). \]
By Robert's classification theorem \cite{Robert}, $\Bst$ is (up to isomorphism) the unique simple, stably projectionless \cstar-algebra in $\FC_R$ with this data.

\rem The notation $\Bst$ is justified, as $\Bst$ should be thought of a stably projectionless analogue of $\Ost$. A unital Kirchberg algebra $A$ is in Cuntz standard form, i.e. $[\eins_A]_0$ is trivial in $K_0(A)$, if and only if $A\cong A\otimes\Ost$. Similarly, one might be tempted to conjecture that a separable, simple, nuclear, stably projectionless, $\CZ$-stable \cstar-algebra $A$ has a trivial pairing map $0=r_A: T(A)\to S(K_0(A))$ if and only if $A\cong A\otimes\Bst$.

\begin{rem} \label{Rok K-theory UHF}
As we need it in the next example, we remark that a similar formula as in \ref{Rok K-theory} holds for the non-unital (and non-simple) case as well. To be more precise, we have the following:

Let $G$ be a finite group and $A$ a separable \cstar-algebra with $A\cong M_{|G|^\infty}\otimes A$. Let $\alpha: G\curvearrowright A$ be a Rokhlin action. Then
\[ 
K_i(A^\alpha)\cong K_i(A\rtimes_\alpha G) \cong \bigcap_{g\in G} \ker(\id-K_i(\alpha_g))\quad\text{for}~i=0,1. 
\]
As this follows easily from \ref{crossed product decomposition} and \ref{fixed point crossed product} after stabilizing with the trivial action on the compacts $\CK$, we omit the proof.
\end{rem}


The following example is essentially the same as the one treated in \cite[5.6]{Nawata}.

\example \label{Nawata action}
Let $\CW$ denote the so-called Razak-Jacelon algebra from \cite{Jacelon}. There exists an action $\gamma: \IZ_2\curvearrowright\CW$ with 
\[
\CW^\gamma\cong\CW\rtimes_\gamma\IZ_2\cong M_{2^\infty}\otimes\Bst
\]
and such that $\gamma$ is locally $\CW^\gamma$-representable.
\begin{proof}
By Robert's classification theorem, there exists an automorphism $\beta\in\Aut(M_{2^\infty}\otimes\Bst)$ with $K_0(\beta)=-\id$. As $\beta^2$ is trivial on the Elliott invariant, $\beta^2$ is approximately inner. Note that all \cstar-algebras in $\FC_R$ have stable rank one. Hence, we can apply \ref{ue range} and get a Rokhlin action $\alpha: \IZ_2\curvearrowright M_{2^\infty}\otimes\Bst$ with $K_0(\alpha)=-\id$. As $\FC_R$ is closed under taking crossed products with Rokhlin actions of finite groups (see \cite[Section 5]{Santiago}), $(M_{2^\infty}\otimes\Bst)\rtimes_\alpha\IZ_2$ is in $\FC_R$ and has trivial $K$-theory by \ref{Rok K-theory UHF}. Moreover, it has a unique tracial state and no unbounded traces. Hence it is isomorphic to the Razak-Jacelon algebra $\CW$. Under this identification, the dual action $\hat{\alpha}$ gives rise to a locally $\set{M_{2^\infty}\otimes\Bst}$-representable (see \ref{local rep}) action $\gamma: \IZ_2\curvearrowright\CW$ with
\[
\CW^\gamma \cong \bigl( (M_{2^\infty}\otimes\Bst)\rtimes_\alpha\IZ_2 \bigl)^{\hat{\alpha}} \cong M_{2^\infty}\otimes\Bst 
\]
and
\[ 
\CW\rtimes_\gamma\IZ_2 \cong ((M_{2^\infty}\otimes\Bst)\rtimes_\alpha\IZ_2)\rtimes_{\hat{\alpha}}\IZ_2 \cong M_2\otimes M_{2^\infty}\otimes\Bst \cong M_{2^\infty}\otimes\Bst .
\]
\end{proof}

\noindent
At last, we would like to show that one can reduce the UCT problem for separable, nuclear \cstar-algebras to a question about certain locally representable actions on $\CO_2$.

\begin{rem}[see {\cite[10.2]{Neukirch}}] \label{integer ring}
Let $p\geq 2$ be a natural number. Let us pick a primitive $p$-th root of unity $\xi_p = \exp(2\pi i/p)\in\IC$. Then the ring generated by $\IZ$ and $\xi_p$, written $\IZ[\xi_p]$, coincides with the ring of integers in the number field $\IQ(\xi_p)$. The additive subgroup of this ring is well-known to be free abelian, with rank equal to $[\IQ(\xi_p):\IQ]$, which coincides with the value of Euler's phi-function at $p$
\[
\phi(p) = \bigl|\set{j\in\set{1,\dots,p} ~|~ \operatorname{gcd}(j,p)=1}\bigl|. 
\]
For example, if $p$ happens to be prime, then $\phi(p)=p-1$.
Hence, we have $\IZ[\xi_p]\cong \IZ^{\phi(p)}$ as abelian groups.
\end{rem}

The following contains \ref{Izumi action} as a special case and is simultaneously contained in \ref{general Izumi} as a special case.

{
\prop \label{minimal O2 actions}
Let $p\geq 2$ be a natural number. Then there exists a locally UCT Kirchberg-representable action $\gamma_p: \IZ_p\curvearrowright\CO_2$ such that $A_p = \CO_2\rtimes_{\gamma_p}\IZ_p$ is $KK$-equivalent to $M_{p^\infty}^{\oplus\phi(p)}$.
}
\begin{proof}
Choose a unital UCT Kirchberg algebra $A_p$ with $K$-theory
\[
(K_0(A_p), [\eins_{A_p}]_0, K_1(A)) \cong (\IZ[\mbox{$\frac{1}{p}$}]^{\oplus\phi(p)}, 0, 0).
\]
By the UCT, $A_p$ is in fact $KK$-equivalent to $M_{p^\infty}^{\oplus\phi(p)}$, since they have identical $K$-theory. Because of \ref{integer ring}, $K_0(A_p)$ is isomorphic to the additive group of the ring $\IZ[\frac{1}{p}, \xi_p]$. Under this identification, we obtain an order $p$ automorphism $\sigma: K_0(A_p)\to K_0(A_p)$ by $x\mapsto\xi_p\cdot x$. Note that obviously $\ker(\id-\sigma)=0$.

Since the $K$-theory of $A_p$ is uniquely $p$-divisible, we have $A_p\cong M_{p^\infty}\otimes A_p$. By \ref{Rok range Kirchberg}, there exists a Rokhlin action $\alpha: \IZ_p\curvearrowright A_p$ with $K_0(\alpha)=\sigma$. Note that from the above observations, combined with \ref{Rok K-theory} and Kirchberg-Phillips classification, it follows that $A_p\rtimes_\alpha\IZ_p\cong\CO_2$. Under this identification, the dual action $\gamma_p=\hat{\alpha}: \IZ_p\curvearrowright\CO_2$ yields a locally $\set{A_p}$-representable (see \ref{local rep}) action with $\CO_2\rtimes_{\gamma_p}\IZ_p\cong A_p$. This finishes the proof.
\end{proof}

The following is well-known:

{
\prop \label{UCT direct sum}
Let $n\in\IN$ be a natural number and $A_1,\dots, A_n$ separable \cstar-algebras. Then each $A_i$ satisfies the UCT if and only if $A_1\oplus\dots\oplus A_n$ satisfies the UCT.
}

{
\prop \label{UCT UHF}
Let $A$ be a separable \cstar-algebra, and let $p,q\geq 2$ be two relatively prime natural numbers. If both $M_{p^\infty}\otimes A$ and $M_{q^\infty}\otimes A$ satisfy the UCT, then so does $A$.
}
\begin{proof}
One has that the \cstar-algebra $Z_{p^\infty,q^\infty}\otimes A$ arises as an extension of $M_{p^\infty}\otimes A\oplus M_{q^\infty}\otimes A$ by $\CC_0(0,1)\otimes M_{p^\infty}\otimes M_{q^\infty}\otimes A$, which both satisfy the UCT by assumption. Hence also $Z_{p^\infty,q^\infty}\otimes A$ satisfies the UCT. Since the Jiang-Su algebra arises as a stationary inductive limit (with injective connecting maps) of $Z_{p^\infty,q^\infty}$, we have that also $\CZ\otimes A \sim_{KK} A$ satisfies the UCT.
\end{proof}

{
\theorem \label{UCT}
Let $p,q\geq 2$ be two prime numbers. The following are equivalent:
\begin{enumerate}
\item Every separable, nuclear \cstar-algebras satisfies the UCT.
\item Every unital Kirchberg algebra satisfies the UCT.
\item If $\beta: \IZ_p\curvearrowright\CO_2$ and $\gamma: \IZ_q\curvearrowright\CO_2$ are pointwise outer, locally Kirchberg-representable actions, then both $\CO_2\rtimes_\beta\IZ_p$ and $\CO_2\rtimes_\gamma\IZ_q$ satisfy the UCT.
\item If $\gamma: \IZ_{pq}\curvearrowright\CO_2$ is a pointwise outer, locally Kirchberg-representable action, then $\CO_2\rtimes_\gamma\IZ_{pq}$ satisfies the UCT.
\end{enumerate}
}
\begin{proof}
First we observe that the implications $(1)\implies (2),(3),(4)$ are trivial. We will first show the implications $(3)\implies (2)$ and $(4)\implies (2)$:

$(3)\implies (2)$: Assume that (2) is false. Then we can pick a unital Kirchberg algebra $A$ that does not satisfy the UCT. By \ref{UCT direct sum} and \ref{UCT UHF}, it follows that either $A\otimes M_{p^\infty}^{\oplus (p-1)}$ or $A\otimes M_{q^\infty}^{\oplus (q-1)}$ does not satisfy the UCT.

Then it follows from \ref{minimal O2 actions} actions that either
\[
(A\otimes\CO_2)\rtimes_{\id_A\otimes\gamma_p} \IZ_p \cong A\otimes A_p \sim_{KK} A\otimes M_{p^\infty}^{\oplus (p-1)}
\]
or
\[
(A\otimes\CO_2)\rtimes_{\id_A\otimes\gamma_q} \IZ_q \cong A\otimes A_q \sim_{KK} A\otimes M_{q^\infty}^{\oplus (q-1)}
\]
does not satisfy the UCT. Since both $\gamma_p$ and $\gamma_q$ are pointwise outer and locally Kirchberg-representable and $A\otimes\CO_2\cong\CO_2$, this gives a counterexample to (3).

$(4)\implies (2)$: Assume that (2) is false. Then we can pick a unital Kirchberg algebra $A$ that does not satisfy the UCT. By \ref{UCT direct sum} and \ref{UCT UHF}, it follows that either $(A\otimes M_{p^\infty})^{\oplus (p-1)\cdot q}$ or $(A\otimes M_{q^\infty})^{\oplus (q-1)\cdot p}$ does not satisfy the UCT. Now by \cite{GoldIz}, there exist two pointwise outer, locally $\set{\CO_\infty}$-representable actions $\alpha_p: \IZ_p\curvearrowright\CO_\infty$ and $\alpha_q: \IZ_q\curvearrowright\CO_\infty$ with
\[
\CO_\infty\rtimes_{\alpha_p}\IZ_p \sim_{KK} C^*(\IZ_p) \cong \IC^p\quad\text{and}\quad \CO_\infty\rtimes_{\alpha_q}\IZ_q \sim_{KK} C^*(\IZ_q)\cong\IC^q.
\]
Recalling that $\IZ_{pq}\cong\IZ_p\oplus\IZ_q$, we obtain two pointwise outer, locally Kirchberg-representable actions $\gamma_p\otimes\alpha_q: \IZ_{pq}\curvearrowright\CO_2\otimes\CO_\infty\cong\CO_2$ and $\gamma_q\otimes\alpha_p: \IZ_{pq}\curvearrowright\CO_2\otimes\CO_\infty\cong\CO_2$ with
\[
\CO_2\rtimes_{\gamma_p\otimes\alpha_q}\IZ_{pq}\cong A_p\otimes (\CO_\infty\rtimes_{\alpha_q}\IZ_q) \sim_{KK} M_{p^\infty}^{\oplus (p-1)\cdot q}
\]
and
\[
\CO_2\rtimes_{\gamma_q\otimes\alpha_p}\IZ_{pq}\cong A_q\otimes (\CO_\infty\rtimes_{\alpha_p}\IZ_p) \sim_{KK} M_{q^\infty}^{\oplus (q-1)\cdot p}.
\]
In particular, it follows that either
\[
(A\otimes\CO_2)\rtimes_{\id_A\otimes\gamma_p\otimes\alpha_q}\IZ_{pq} \sim_{KK} (A\otimes M_{p^\infty})^{\oplus (p-1)\cdot q}
\]
or
\[
(A\otimes\CO_2)\rtimes_{\gamma_q\otimes\alpha_p}\IZ_{pq} \sim_{KK} (A\otimes M_{q^\infty})^{\oplus (q-1)\cdot p}
\]
does not satisfy the UCT. Hence this gives a counterexample to (4).

Finally, let us show the implication $(2)\implies (1)$. We stress that this is already well-known and due to Kirchberg (see \cite[2.17]{Ki} for an even stronger assertion), but we repeat the argument for the reader's convencience:

Let $A$ be a separable, nuclear \cstar-algebra. By adding a unit, if necessary, we can assume that $A$ is unital. (This procedure does not change if $A$ is in the UCT class or not.) We show that $A$ is $KK$-equivalent to a unital Kirchberg algebra, which immediately confirms the implication $(2)\implies (1)$.

For this, we moreover assume that $A\cong A\otimes\Ost$. If this is not the case, we can stabilize with $\Ost$, since this does not change the $KK$-equivalence class. If $A\cong A\otimes\Ost$, there exists a unital embedding $\iota: \CO_2\to A$. Let us explicitely choose two isometries $s_1,s_2\in A$ with $\eins=s_1s_1^*+s_2s_2^*$. By Kirchberg's embedding theorem (see \cite{KiPhi}), there exists a unital embedding $\kappa: A\to\CO_2$. Now define an endomorphism
\[
\phi: A\to A\quad\text{via}\quad \phi(x)=s_1xs_1^*+s_2(\iota\circ\kappa)(x)s_2^*.
\]
Consider the inductive limit $B=\dst\lim_{\longrightarrow}\set{A, \phi}$. Then $B$ is clearly again separable, unital, nuclear and $\CO_\infty$-absorbing. Since for all $x\neq 0$, the element
\[
s_2^*\phi(x)s_2 = \iota\circ\kappa(x)
\]
is the image of the full element $\kappa(x)\in\CO_2$, it follows that $\phi(x)$ is also full. Hence $B$ is simple.

Lastly, it is immediate that $KK(\phi)=1+KK(\iota\circ\kappa)=1$, since $\iota\circ\kappa$ factors through $\CO_2$. In particular, the connecting maps of this inductive system induce $KK$-equivalences. Hence it follows that the canonical embedding $\phi_\infty: A\to B$ induces a $KK$-equivalence. This is implied by \cite[2.4]{Dad}, which basically boils down to plugging in the Milnor sequence \cite[21.5.2]{BlaKK} for the functor $KK(\_\!\_\!\_~,B)$ in this situation.  

To summarize, we have found a unital Kirchberg algebra $B$ that is $KK$-equivalent to $A$. This finishes the proof.
\end{proof}

\rem From the proof of \ref{UCT}, we can say a bit more in the case that we consider actions of only one natural number. Namely, we get the following:

Let $p\geq 2$ be a natural number. Assume that for all pointwise outer, locally Kirchberg-representable actions $\gamma: \IZ_p\curvearrowright\CO_2$, the crossed product $\CO_2\rtimes_\gamma\IZ_p$ satisfies the UCT. Then all separable, nuclear and $M_{p^\infty}$-absorbing \cstar-algebras satisfy the UCT.


\end{document}